\newcommand*{\da@rightarrow}{\mathchar"0\hexnumber@\symAMSa 4B }
\newcommand*{\da@leftarrow}{\mathchar"0\hexnumber@\symAMSa 4C }
\newcommand*{\xdashrightarrow}[2][]{%
  \mathrel{%
    \mathpalette{\da@xarrow{#1}{#2}{}\da@rightarrow{\,}{}}{}%
  }%
}
\newcommand{\xdashleftarrow}[2][]{%
  \mathrel{%
    \mathpalette{\da@xarrow{#1}{#2}\da@leftarrow{}{}{\,}}{}%
  }%
}
\newcommand*{\da@xarrow}[7]{%
  \sbox0{$\ifx#7\scriptstyle\scriptscriptstyle\else\scriptstyle\fi#5#1#6\m@th$}%
  \sbox2{$\ifx#7\scriptstyle\scriptscriptstyle\else\scriptstyle\fi#5#2#6\m@th$}%
  \sbox4{$#7\dabar@\m@th$}%
  \dimen@=\wd0 %
  \ifdim\wd2 >\dimen@
    \dimen@=\wd2 %
  \fi
  \count@=2 %
  \def\da@bars{\dabar@\dabar@}%
  \@whiledim\count@\wd4<\dimen@\do{%
    \advance\count@\@ne
    \expandafter\def\expandafter\da@bars\expandafter{%
      \da@bars
      \dabar@ 
    }%
  }%
  \mathrel{#3}%
  \mathrel{%
    \mathop{\da@bars}\limits
    \ifx\\#1\\%
    \else
      _{\copy0}%
    \fi
    \ifx\\#2\\%
    \else
      ^{\copy2}%
    \fi
  }%
  \mathrel{#4}%
}
\numberwithin{equation}{section}
\theoremstyle{plain}
\newtheorem*{theorema*}{Theorem A}
\newtheorem{theorem}[equation]{Theorem}
\newtheorem{proposition}[equation]{Proposition}
\newtheorem{lemma}[equation]{Lemma}
\newtheorem{conjecture}[equation]{Conjecture}
\theoremstyle{definition}
\newtheorem{definition}[equation]{Definition}
\newtheorem{example}[equation]{Example}
\theoremstyle{remark}
\newtheorem{remark}[equation]{Remark}
\newenvironment{enumalph}
{\begin{enumerate}}
{\end{enumerate}}
\newenvironment{enumroman}
{\begin{enumerate}}
{\end{enumerate}}
\newcommand{\Z}{\mathbb{Z}}
\newcommand{\Q}{\mathbb{Q}}
\newcommand{\F}{\mathbb{F}}
\let\C\relax
\newcommand{\C}{\mathbb{C}}
\newcommand{\PP}{\mathbb{P}}
\newcommand{\GA}{\mathbf{G}_A}
\newcommand{\T}{\mathbf{T}}
\newcommand{\defi}[1]{\textsf{#1}} 	
\DeclareMathOperator{\Jac}{Jac}
\DeclareMathOperator{\GL}{GL}
\DeclareMathOperator{\Nm}{Nm}
\DeclareMathOperator{\Galois}{Gal}
\newcommand{\Gal}[2]{\Galois(#1\,|\,#2)}
\DeclareMathOperator{\Aut}{Aut}
\DeclareMathOperator{\Frob}{Frob}
\DeclareMathOperator{\rk}{\mathrm{rk}}
\DeclareMathOperator{\End}{\mathrm{End}}
\DeclareMathOperator{\alg}{{al}}
\newcommand{\frakp}{\mathfrak{p}}
\newcommand{\frakq}{\mathfrak{q}}
\newcommand{\FAconn}{F_A^{\textup{\tiny{conn}}}}
\definecolor{darkred}{HTML}{CC1F1F}
\definecolor{green}{rgb}{.4,.7,.4}
\definecolor{blue}{rgb}{.2,.6,.75}
\definecolor{pastelb}{HTML}{3333FF}
\definecolor{pastelyellow}{rgb}{0.992157, 0.552941, 0.235294}
\definecolor{pastelorange}{rgb}{0.941176, 0.231373, 0.12549}
\definecolor{pastelred}{rgb}{0.741176, 0., 0.14902}
\definecolor{darkbrown}{rgb}{0.25098, 0., 0.0745098}
\DeclareMathOperator{\Hom}{Hom}
\DeclareMathOperator{\jvGal}{Gal}
\DeclareMathOperator{\bfGL}{\mathbf{GL}}
\DeclareMathOperator{\bfG}{\mathbf{G}}
\newcommand{\setplaces}{S_{MT}}
\def\conn{{\operatorname{conn}}}
\def\D{\mathbf{D}}
\def\G{\mathbf{G}}
\def\T{\mathbf{T}}
\def\bbar#1{\setbox0=\hbox{$#1$}\dimen0=.2\ht0 \kern\dimen0
\overline{\kern-\dimen0 #1}}
\newcommand{\kbar}{k^{\alg}}
\newcommand{\calW}{\mathcal W}
\theoremstyle{definition}
\newtheorem{algm}[equation]{Algorithm}
\newenvironment{enumalgalph}
{\begin{enumerate}}
{\end{enumerate}}
\newenvironment{enumalg}
{\begin{enumerate}}
{\end{enumerate}}
\newenvironment{enumromanii}
{\begin{enumerate}}
{\end{enumerate}}
\begin{document}

\title[Frobenius central endomorphisms]{Identifying central endomorphisms of an abelian variety via Frobenius endomorphisms}

\author[Edgar Costa]{Edgar Costa}
\address[Edgar Costa]{
  Department of Mathematics\\
  Massachusetts Institute of Technology\\
  Cambridge, MA 02139, USA\\
  ORCiD: 0000-0003-1367-7785}
\email{edgarc@mit.edu}
\urladdr{\url{https://edgarcosta.org}}

\author{Davide Lombardo}
\address[Davide Lombardo]{
  Dipartimento di Matematica\\
  Universit\`{a} di Pisa\\
  Largo Bruno Pontecorvo 5\\
  56127, Pisa, Italy\\
  ORCiD: 0000-0002-1069-3379}
\email{davide.lombardo@unipi.it}
\urladdr{\url{http://people.dm.unipi.it/lombardo/}}

\author{John Voight}
\address[John Voight]{
  Department of Mathematics\\
  Dartmouth College\\
  6188 Kemeny Hall\\
  Hanover, NH 03755, USA\\
  ORCiD: 0000-0001-7494-8732}
\email{jvoight@gmail.com}
\urladdr{\url{http://www.math.dartmouth.edu/~jvoight/}}

\begin{abstract}
  Assuming the Mumford--Tate conjecture, we show that the center of the endomorphism ring of an abelian variety defined over a number field can be recovered from an appropriate intersection of the fields obtained from its Frobenius endomorphisms.
  We then apply this result to exhibit a practical algorithm to compute this center.
\end{abstract}

\date{\today}

\maketitle

\section{Introduction}

Let $F$ be a number field with algebraic closure $F^{\alg}$.
Let $A$ be an abelian variety over $F$ and let $A^{\alg} \colonequals A \times_F F^{\alg}$ be its base change to $F^{\alg}$.
For a prime $\frakp$ of $F$ (i.e., a nonzero prime ideal of its ring of integers), we write $\F_\frakp$ for its residue field, and when $A$ has good reduction at $\frakp$ we let $A_{\frakp}$ denote the reduction of $A$ modulo $\frakp$.

In this article, we seek to recover the center of the geometric endomorphism algebra of $A$ from the action of the Frobenius endomorphisms on its reductions $A_\frakp$.
Our main result is the following theorem.

\begin{theorem}
  \label{thm:mainthm}
  Let $A$ be an abelian variety over a number field $F$ such that $A^{\alg}$ is isogenous to a power of a simple abelian variety.
  Let $B \colonequals \End(A^{\alg}) \otimes \Q$ be the geometric endomorphism algebra of $A$, let $L \colonequals Z(B)$ be its center, and let $m \in \Z_{\geq 1}$ be such that $m^2=\dim_L B$.
  Suppose that the Mumford--Tate conjecture (Conjecture \textup{\ref{conj:MT}}) for $A$ holds.  Then the following statements hold.

\begin{enumalph}
\item There exists a set $S$ of primes of $F$ of positive density such that for each $\frakp \in S$:
  \begin{enumromanii}
  \item $A$ has good reduction at $\frakp$, and the reduction $A_\frakp$ is isogenous (over $\mathbb{F}_p$) to the $m$th power of a geometrically simple abelian variety over $\F_\frakp$; and
  \item The $\Q$-algebra $M(\frakp) \colonequals Z(\End(A_{\frakp}) \otimes \Q)$ is a field, generated by the $\frakp$-Frobenius endomorphism, and there is an embedding $L=Z(B) \hookrightarrow M(\frakp)$ of number fields.
  \end{enumromanii}
\item For any $\frakq \in S$, and for all $\frakp \in S$ outside of a set of density $0$ (depending on $\frakq$), if $M'$ is a number field that embeds in $M(\frakq)$ and in $M(\frakp)$, then $M'$ embeds in $L$.
\end{enumalph}
\end{theorem}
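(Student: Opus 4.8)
The plan is to deduce part (b) from part (a) and the Mumford--Tate conjecture by an equidistribution argument.

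\emph{Reduction to one fixed field.} Fix $\frakq \in S$. Since $M(\frakq)$ is a fixed number field, it has only finitely many subfields up to isomorphism; let $\mathcal{E}$ be the finite set of those $M'$ with $M' \not\hookrightarrow L$. It suffices to prove that for each $M' \in \mathcal{E}$ the set $\{\frakp \in S : M' \hookrightarrow M(\frakp)\}$ has density $0$, since the exceptional set in (b) is then the finite union $\bigcup_{M' \in \mathcal{E}} \{\frakp \in S : M' \hookrightarrow M(\frakp)\}$. Moreover, if $M' \hookrightarrow M(\frakp)$ then, using the embedding $L \hookrightarrow M(\frakp)$ of part (a), the subfield of $M(\frakp)$ generated by the images of $L$ and of $M'$ is a field quotient of $L \otimes_{\Q} M'$, hence one of the finitely many field factors $E_1,\dots,E_k$ of $L \otimes_{\Q} M'$; each $E_i$ receives a copy of $L$ and a copy of $M'$, so since $M' \not\hookrightarrow L$ we have $L \hookrightarrow E_i$ with $E_i \not\hookrightarrow L$. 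Thus it is enough to show: for every fixed number field $E$ with $E \not\hookrightarrow L$, the set $\{\frakp \in S : E \hookrightarrow M(\frakp)\}$ has density $0$.

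\emph{Ramification and structure.} First, there is a finite set $\mathcal{B}$ of rational primes (those below the bad primes of $A$, together with finitely many others depending only on $A$) such that $M(\frakp) = \Q(\pi_\frakp)$ is unramified outside $\mathcal{B} \cup \{p\}$ for every $\frakp \in S$ with $\frakp \mid p$. Hence if $E$ ramifies at some $\ell_0 \notin \mathcal{B}$, then $E \hookrightarrow M(\frakp)$ forces $\frakp \mid \ell_0$, and only finitely many such $\frakp$ exist. Since the fields $E$ arising above have bounded degree, the Hermite--Minkowski theorem leaves only finitely many candidate fields $E$, all unramified outside $\mathcal{B}$. Second, writing $G \colonequals \mathrm{MT}(A)$, the Mumford--Tate conjecture identifies the connected $\ell$-adic monodromy group with $G$; by a theorem of Serre the Frobenius torus $\T_\frakp \colonequals \overline{\langle \Frob_\frakp \rangle}$ is a maximal torus of $G$ for a density-$1$ set of $\frakp$, and then $M(\frakp)$ is the subfield of the splitting field $N_\frakp$ of $\T_\frakp$ fixed by the stabilizer in $\Gal{N_\frakp}{\Q}$ of a single weight of $H_1$ lying in the factor attached to the geometrically simple isogeny constituent, while $L$ is cut out by the larger stabilizer of an entire $\Qbar$-irreducible $G$-subrepresentation of $H_1$ --- which is exactly why $L \hookrightarrow M(\frakp)$. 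For a density-$1$ set of $\frakp$ the action of $\Gal{N_\frakp}{\Q}$ on the character lattice realizes the full arithmetic Weyl group $\calW$ attached to $(G,\T_\frakp)$.

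\emph{The density-$0$ estimate (crux).} Let $E$ be one of the finitely many remaining fields, so $E \not\hookrightarrow L$ (and $L \hookrightarrow E$) and $E$ is unramified outside $\mathcal{B}$. For a rational prime $\ell_0$ unramified in the Galois closure of $E$ and in $F(A[\ell_0])$, the condition $E \hookrightarrow M(\frakp)$ implies $E \otimes \F_{\ell_0} \hookrightarrow M(\frakp) \otimes \F_{\ell_0}$, which is a constraint on the factorization type modulo $\ell_0$ of the characteristic polynomial of $\Frob_\frakp$, equivalently on the conjugacy class of $\Frob_\frakp$ in $\Gal{F(A[\ell_0])}{F}$. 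By the Mumford--Tate conjecture the mod-$\ell_0$ image is, for all large $\ell_0$, as large as $G$ permits; since $E \not\hookrightarrow L$ the above constraint is \emph{not} automatic (unlike the trivial constraint coming from $L \hookrightarrow M(\frakp)$), so a proportion $\geq \delta$ of the mod-$\ell_0$ conjugacy classes violate it, for some $\delta > 0$ independent of $\ell_0$ --- this is a representation-theoretic statement about $\calW$ acting on the weight lattice of $H_1$, reduced modulo $\ell_0$. By the Chebotarev density theorem, for any finite set $\{\ell_0^{(1)},\dots,\ell_0^{(n)}\}$ of such primes the set $\{\frakp \in S : E \hookrightarrow M(\frakp)\}$ is contained in a set of density $\leq (1-\delta)^n$, using that the fields $F(A[\ell_0^{(j)}])$ are linearly disjoint over $F$ so that the conjugacy classes at distinct $\ell_0^{(j)}$ equidistribute jointly. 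Letting $n \to \infty$ gives density $0$, completing the argument.

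\emph{Main obstacle.} The hard part is the last step: extracting from the Mumford--Tate conjecture an equidistribution statement that is uniform in $\ell_0$ --- both that $\calW$ is realized generically with the relevant proportions bounded away from $1$ uniformly, and that the conditions at distinct primes $\ell_0$ are asymptotically independent --- while correctly treating the case $\End(A) \neq \End(A^{\alg})$, where $G$ and the $\ell$-adic monodromy group are disconnected and both the description of $M(\frakp)$ and the group $\calW$ must be enlarged by the component group. A secondary (but genuinely necessary) point is verifying that the factorization-type constraint is violated on a set of positive proportion precisely when $E \not\hookrightarrow L$, which requires an explicit analysis of $\calW$ acting on the weights of $H_1$ and of how $L$ sits inside this combinatorial data.
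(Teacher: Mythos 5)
Your reduction to a density-zero statement for a fixed number field $E$ with $E \not\hookrightarrow L$ is reasonable in outline, but the ``density-$0$ estimate (crux)'' contains a genuine gap, which you yourself flag in the ``Main obstacle'' paragraph. The two assertions you need but do not prove---that the mod-$\ell_0$ factorization-type constraint arising from $E \hookrightarrow M(\frakp)$ is violated on a proportion of Chebotarev classes bounded away from zero \emph{uniformly} in $\ell_0$, and that these conditions at distinct auxiliary primes $\ell_0$ are asymptotically independent---together constitute essentially the entire content of part~(b). The first requires precisely the Weyl-group-on-weights combinatorics that you gesture at but do not carry out; the second needs a Goursat-style argument for the product of the mod-$\ell_0$ representations that is not automatic. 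There is also a technical wrinkle glossed over: the splitting of $\ell_0$ in $M(\frakp)$ is read off from $c_\frakp(T)\bmod \ell_0$ by Dedekind's criterion only when $\ell_0 \nmid \disc g_\frakp(T)$, a condition that fails for a positive-density set of $\frakp$ depending on $\ell_0$, so the constraint needs to be reformulated. Finally, part~(a) of the theorem is not addressed at all.

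The paper avoids all of the uniformity and independence issues by working over a \emph{single} fixed normal extension $N\supseteq\Q$ containing $F_A^{\conn}$, $M(\frakq)$, and $F_{\G_A}$. Zywina's \cite[Proposition~6.6]{zywina-14} then gives, for $\frakp$ outside a density-zero set depending on $N$ (hence on $\frakq$), that the orbits of $\Gal{N}{\Q}$ on the roots of $g_\frakp(T)$ agree with the orbits of the absolute Weyl group $W(\G_A,\T)$ on the weight set $\Omega$; combined with \cite[Lemma~6.1(ii)]{zywina-14} (this Weyl group has exactly $[L:\Q]$ orbits on $\Omega$), one obtains that $g_\frakp(T)$ factors over $N$ into exactly $[L:\Q]$ conjugate irreducible factors, each normic with coefficient field conjugate to $L$ (Proposition~\ref{prop:zywinaS}). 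The Galois theory of normic polynomials (Propositions~\ref{prop:galthy} and~\ref{prop:maxgal}) then delivers part~(b) directly. The ingredient your proposal is missing is exactly what \cite[Proposition~6.6]{zywina-14} supplies; and once it is in hand, the auxiliary primes $\ell_0$ become unnecessary.
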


Theorem~\ref{thm:mainthm} relies crucially on work of Zywina~\cite{zywina-14}.  By an explicit argument, the result was proven for $A$ an abelian surface by Lombardo~\cite[Theorem 6.10]{lombardo-18}.
This theorem may be thought of as a kind of local-global principle for the center of the endomorphism algebra: roughly speaking, the center of the geometric endomorphism algebra of $A$ is the largest number field that embeds in the center of the geometric endomorphism algebra in a relevant set of reductions over finite fields.

The set $S$ in Theorem~\ref{thm:mainthm} may be taken as in Definition~\ref{def:descriptionofset}.   If $m$ and a model for $A$ are given, then there is an effectively computable subset $S' \subseteq S$ with $S \smallsetminus S'$ finite  (see Lemma \ref{lem:effectcomputS}).  The value of $m$ is effectively computable given a model for $A$ (see Lemma \ref{lem:guessm}), and in fact it is easy in practice to guess $m$ (see Remark \ref{rmk:guessm}).

The primary motivation for this theorem is an algorithmic application.
A result of Costa--Mascot--Sijsling--Voight~\cite[Proposition 7.4.7]{costa-mascot-sijsling-voight-18} gives a conditional way to rigorously certify that a numerical calculation \cite[\S 2.2]{costa-mascot-sijsling-voight-18} of the endomorphism ring of a Jacobian is correct.  This result is conditioned on a hypothesis~\cite[Hypothesis 7.4.6]{costa-mascot-sijsling-voight-18} that is directly implied by Theorem~\ref{thm:mainthm}(b).
In this way, Theorem~\ref{thm:mainthm} allows us to determine a sharp upper bound on $\rk \End(A^{\alg})$, conditional on the Mumford--Tate conjecture holding for $A$, and thereby compute $\End(A^{\alg})$ in practice whenever the abelian variety $A$ is explicitly given as a Jacobian of a curve over $F$ or, more generally, as an isogeny factor of one (hence in principle all abelian varieties, see~e.g.\ Milne~\cite[\S~III-10]{milneAV}).

Going a bit further in this direction, we present here an alternative method to compute the center of the geometric endomorphism algebra of $A$ in Algorithm~\ref{alg:mainctr}, again conditional on the Mumford--Tate conjecture, using the notion of normic polynomials (see Section~\ref{sec:galthy}).  This algorithm has the advantage of avoiding potentially impractical field intersections suggested by Theorem~\ref{thm:mainthm}(b).

One expects to have correctly identified the center $L$ as in the conclusion of Theorem~\ref{thm:mainthm} after testing $O([F_{A}^{\textup{conn}}:F]^2)$ pairs of primes $\frakp,\frakq$, where $F_{A}^{\textup{conn}}$ is the smallest extension of $F$ for which all the $\ell$-adic monodromy groups associated to $A$ are connected---but Algorithm~\ref{alg:mainctr} does not compute the field $F_{A}^{\textup{conn}}$ directly.
In particular, we prove the correctness of Algorithm~\ref{alg:mainctr} without establishing if the density zero set of primes in Theorem~\ref{thm:mainthm}(b) can be computed effectively.
Finally, even without assuming the Mumford--Tate conjecture for $A$, Algorithm~\ref{alg:mainctr}
still yields an \emph{upper bound} on the center of the geometric endomorphism algebra of $A$---we just have no guarantee that this upper bound is sharp.

We conclude with a result refining Theorem \ref{thm:mainthm} to obtain another arithmetically interesting field attached to $A$, namely the splitting field of the Mumford--Tate group (see Section \ref{sec:splitred} for a precise definition).  Keeping notation as in Theorem \ref{thm:mainthm}, for $\frakp \in S$ let $N(\frakp)$ be a normal closure of the extension $M(\frakp) \supseteq \Q$ generated by the $\frakp$-Frobenius endomorphism.

\begin{theorem} \label{thm:MST}
  \label{thm:splittingfieldMT}
  Let $A$ be an abelian variety over a number field $F$ such that $A^{\alg}$ is isogenous to a power of a simple abelian variety, and suppose that the Mumford--Tate conjecture for $A$ holds.  Let $F_{\G_A}$ be the splitting field of the Mumford--Tate group $\G_A$ of $A$.  Then the following statements hold.
  \begin{enumalph}
  \item There exists a subset $\setplaces \subseteq S$,  of the same density as $S$,
   such that for each $\frakp \in \setplaces$, conditions \textup{(i)}--\textup{(ii)} of Theorem~\textup{\ref{thm:mainthm}(a)} hold and moreover:
  \begin{enumerate}
  \item[\textup{(iii)}] There is an embedding $F_{\G_A} \hookrightarrow N(\frakp)$.
  \end{enumerate}
  \item For any $\frakq \in \setplaces$, and for all $\frakp \in \setplaces$ outside of a set of density $0$ (depending on $\frakq$), we have $N(\frakq) \cap N(\frakp) \simeq F_{\G_A}$.
  \end{enumalph}
\end{theorem}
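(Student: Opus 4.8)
The plan is to translate Theorem~\ref{thm:splittingfieldMT} into the language of Frobenius tori. Let $V$ denote the standard representation of $\G_A$ (the first cohomology of $A$), let $W$ be the Weyl group of $\G_A$, and let $\mu\colon\Gal{F^{\alg}}{\Q}\to\operatorname{Out}(\G_A)$ be the canonical outer Galois action; by the definition of the splitting field in Section~\ref{sec:splitred}, the fixed field of $\mu$ is $F_{\G_A}$. For a prime $\frakp$ of good reduction let $T_\frakp$ be the \emph{Frobenius torus}, i.e.\ the smallest algebraic subgroup of $\GL(V_\ell)$ containing the semisimple part of the $\frakp$-Frobenius acting on the $\ell$-adic realization $V_\ell$. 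Since the characteristic polynomial of Frobenius has coefficients in $\Q$ that are independent of $\ell$, the group $T_\frakp$ is the base change of a subgroup of $\GL(V)$ defined over $\Q$; and by the theory of Frobenius tori (see \cite{zywina-14}), granting the Mumford--Tate conjecture, for all $\frakp$ outside a set of density $0$ the torus $T_\frakp$ is a maximal torus of $\G_A$ defined over $\Q$.

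For part (a) I would take $\setplaces\colonequals\{\frakp\in S : T_\frakp\text{ is a maximal torus of }\G_A\}$. Since $S$ has positive density and we are deleting a set of density $0$, the set $\setplaces$ has the same density as $S$, and conditions (i)--(ii) of Theorem~\ref{thm:mainthm}(a) hold for all $\frakp\in\setplaces$; it remains to establish (iii). By condition (i) the reduction $A_\frakp$ is isogenous to $C_\frakp^m$ for a geometrically simple $C_\frakp/\F_\frakp$, so $M(\frakp)=\Q(\pi_\frakp)$ with $\pi_\frakp$ the Frobenius of $C_\frakp$, and the characteristic polynomial of Frobenius on $V_\ell$ is a power of the minimal polynomial of $\pi_\frakp$. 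Hence its normal closure $N(\frakp)$ may be realized inside $F^{\alg}$ as the subfield generated by the Frobenius eigenvalues; but this is exactly the minimal field over which all characters of $T_\frakp$ are defined, i.e.\ the splitting field of the torus $T_\frakp$. Since $T_\frakp$ is a maximal torus of $\G_A$ over $\Q$, the group $(\G_A)_{N(\frakp)}$ has a split maximal torus, hence is split, hence $N(\frakp)\supseteq F_{\G_A}$; this is (iii).

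For part (b), applying (iii) to $\frakp$ and to $\frakq$ gives $F_{\G_A}\subseteq N(\frakq)\cap N(\frakp)$ for all $\frakp,\frakq\in\setplaces$, so only the reverse inclusion remains. Being a maximal torus of $\G_A$ over $\Q$, the torus $T_\frakp$ corresponds to a homomorphism $\phi_\frakp\colon\Gal{F^{\alg}}{\Q}\to W\rtimes\operatorname{Out}(\G_A)$ whose fixed field is $N(\frakp)$ and whose composite with the projection to $\operatorname{Out}(\G_A)$ is $\mu$; in particular $\Gal{N(\frakp)}{F_{\G_A}}$ embeds into $W$, and the $\Q$-isomorphism class of $T_\frakp$ — equivalently, the field $N(\frakp)$ together with the action of $\Gal{F^{\alg}}{\Q}$ on the Frobenius eigenvalues — is determined by the characteristic polynomial of the $\frakp$-Frobenius. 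Fixing $\frakq$, there are only finitely many fields $M'$ with $F_{\G_A}\subsetneq M'\subseteq N(\frakq)$, and I would show that for each of them $\{\frakp\in\setplaces : M'\subseteq N(\frakp)\}$ has density $0$, arguing exactly as in the proof of Theorem~\ref{thm:mainthm}(b) via the equidistribution results of \cite{zywina-14}. Deleting these finitely many density-$0$ subsets from $\setplaces$ then leaves a set on which $N(\frakq)\cap N(\frakp)$ contains no $M'\supsetneq F_{\G_A}$, so $N(\frakq)\cap N(\frakp)=F_{\G_A}$.

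The step I expect to be the main obstacle is this last one: passing from control of the \emph{geometric} conjugacy class of the $\frakp$-Frobenius, which Chebotarev-type equidistribution gives directly, to control of the \emph{arithmetic} $\Q$-isomorphism class of the Frobenius torus $T_\frakp$, so as to rule out, for a density-one set of $\frakp$, the accidental appearance inside $N(\frakp)$ of a fixed proper subextension $M'\supsetneq F_{\G_A}$ of $N(\frakq)$. Already for elliptic curves this requires an Archimedean input beyond bare Chebotarev — namely that a fixed imaginary quadratic field occurs as $\Q(\pi_p)$ for only a density-$0$ set of primes $p$ — and in general one must import the corresponding estimates from \cite{zywina-14} exactly as in Theorem~\ref{thm:mainthm}(b). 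A secondary technical point, also shared with Theorem~\ref{thm:mainthm}, is the reduction to the case of connected $\ell$-adic monodromy, which is handled by passing to $F_A^{\textup{conn}}$ and descending.
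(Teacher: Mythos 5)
Your part (a) is essentially the paper's argument, transposed into the language of Frobenius tori. The paper takes $\setplaces$ to be the primes of $S$ where the roots of $c_\frakp(T)$ generate a free group of rank $\rk\G_A$, produces (following Noot and Zywina) an element $t_\frakp$ of a fixed maximal torus $\T$ of $\G_A$ whose characteristic polynomial is $c_\frakp(T)$, and then shows in Lemma~\ref{lemma:SplittingFieldTorus} that the free-rank condition forces the Zariski closure of $\langle t_\frakp\rangle$ to be all of $\T$, whence $\Q(\calW_\frakp)=N(\frakp)$ splits $\T$ and therefore $\G_A$. Your condition ``$T_\frakp$ is a maximal torus of $\G_A$'' is equivalent, so part (a) is sound and matches the paper in content.

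For part (b) you correctly identify the nontrivial direction, $N(\frakq)\cap N(\frakp)\subseteq F_{\G_A}$, and correctly diagnose the obstacle: one needs arithmetic control of $N(\frakp)$, not merely equidistribution of the geometric Frobenius conjugacy class. But your plan --- ruling out each of the finitely many intermediate fields $M'$ with $F_{\G_A}\subsetneq M'\subseteq N(\frakq)$ one at a time --- is left as a black box, and ``arguing exactly as in Theorem~\ref{thm:mainthm}(b)'' does not supply the missing input, since that proof is organized around normic polynomials and does not directly produce the per-$M'$ density-zero estimate you need. The paper's resolution is different and cleaner: after reducing to $F=F_A^{\conn}$, it applies Zywina's \cite[Proposition 6.6]{zywina-14} \emph{twice} --- once over the base field $F_{\G_A}$, giving $\Gal{F_{\G_A}(\calW_\frakp)}{F_{\G_A}}\simeq W(\G_A,\T)$ for $\frakp$ outside a density-zero set $\Sigma_1$, and once over the base field $N(\frakq)$, giving $\Gal{N(\frakq)(\calW_\frakp)}{N(\frakq)}\simeq W(\G_A,\T)$ for $\frakp$ outside a further density-zero set $\Sigma_{2,\frakq}$. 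Since $N(\frakp)=F_{\G_A}(\calW_\frakp)$ by part (a), comparing the two Weyl-group isomorphisms shows that $N(\frakp)$ and $N(\frakq)$ are linearly disjoint over $F_{\G_A}$, which is exactly the reverse inclusion. Enlarging the base field from $F_{\G_A}$ to $N(\frakq)$ in the second application is the single move that furnishes the arithmetic control you flagged as the main obstacle; without identifying that move, your part (b) has a genuine gap.
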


In Theorem \ref{thm:MST}, the intersection $N(\frakq) \cap N(\frakp)$ is well-defined up to isomorphism since both fields are normal extensions of $\Q$, and so this intersection can be computed without to resorting to normic polynomials.  

For further work in this direction, see also the recent paper of Zywina \cite{zywina-20}, giving an algorithmic approach to the computation of the Mumford--Tate group itself up to isomorphism using the data of Frobenius polynomials.

\subsection*{Organization}

This article is organized as follows.  In section \ref{sec:galthy} we set up some basic Galois theory.  Then in section \ref{sec:splitred} we review what is needed from work of Zywina \cite{zywina-14} and Costa--Mascot--Sijsling--Voight \cite{costa-mascot-sijsling-voight-18} and prove Theorem \ref{thm:mainthm}.  Then in section \ref{sec:splitMT} we prove Theorem \ref{thm:MST}.  We conclude in section \ref{sec:algm} with an algorithmic application (Algorithm~\ref{alg:mainctr}).

\subsection*{Acknowledgements}

The authors would like to thank Andrea Maffei for an enlightening discussion about the Steiner section for reductive groups, the anonymous referees for their critical feedback, Claus Fieker, Mark van Hoeij, Tommy Hofmann, and Jeroen Sijsling for pointers, and David Zywina for helpful guiding discussions.
Costa  was supported by a Simons Collaboration Grant (550033).
Voight was supported by an NSF CAREER Award (DMS-1151047) and a Simons Collaboration Grant (550029).

\section{Galois theory} \label{sec:galthy}

In this section, we relate field embeddings to normic factors of a minimal polynomial using some basic Galois theory: see also Kl\"uners \cite{MR1673595}, van Heoij--Kl\"uners--Novocin \cite[Definition 5]{vHKN}, and Szutkoski--van Hoeij \cite[Theorem 4]{SvH}.  Throughout this section, let $K$ be a field with separable closure $K^{\textup{sep}}$.  For a field homomorphism $v\colon K \hookrightarrow L$ and a polynomial $f(T)=\sum_i a_i T^i \in K[T]$, we define
\[ (vf)(T) \colonequals \textstyle{\sum_i} v(a_i) T^i \in L[T] \]
to be the polynomial obtained by applying $v$ to the coefficients of $f$.

\begin{definition}
  Let $L \supseteq K$ be a separable field extension of finite degree.  For a polynomial $f(T) \in L[T]$, define the \defi{norm} from $L$ to $K$ of $f(T)$ to be
  \[ \Nm_{L|K}(f(T)) \colonequals \prod_{v\colon L \hookrightarrow K^{\textup{sep}}} (vf)(T), \]
where the product runs over the $[L:K]$ distinct $K$-embeddings $L \hookrightarrow K^{\textup{sep}}$.
\end{definition}

Since $\Gal{K^{\textup{sep}}}{K}$ permutes the embeddings $L \hookrightarrow K^{\textup{sep}}$, by Galois theory we have $\Nm_{L|K}(f(T)) \in K[T]$.  Accordingly, we may also define the norm as the product over the embeddings $L \hookrightarrow N$ for any Galois extension $N \supseteq K$ that has at least one such embedding.

\begin{example}
  If $f(T) \in K[T]$ is monic, irreducible, and separable, and $L=K(a)$ is the field obtained by adjoining a root $a$ of $f(T)$, then $\Nm_{L|K}(T-a)=f(T)$.
\end{example}

\begin{proposition}
  \label{prop:galthy}
  Let $g(T) \in K[T]$ be monic, irreducible, and separable, and let $a \in K^{\textup{sep}}$ be a root of $g(T)$.
  Let $L \supseteq K$ be a finite separable extension and let $h(T) \in L[T]$ be monic.
  Then the following conditions are equivalent:
  \begin{enumroman}
  \item $g(T)=\Nm_{L|K} h(T)$;
  \item There exists a $K$-embedding $\sigma\colon L \hookrightarrow K(a)$ such that $(\sigma h)(T)$ is the minimal polynomial of $a$ over $\sigma(L)$; and
  \item $h(T)$ is an irreducible factor of $g(T)$ in $L[T]$ and $\deg g(T)=[L:K]\deg h(T)$.
  \end{enumroman}
  Moreover, if $h(T)$ satisfies these equivalent conditions, then $L$ is generated over $K$ by the coefficients of $h(T)$.
\end{proposition}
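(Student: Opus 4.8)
The plan is to prove the chain of implications (i) $\Rightarrow$ (ii) $\Rightarrow$ (iii) $\Rightarrow$ (i), and then deduce the final ``moreover'' clause from (ii). Throughout, fix a Galois closure $N \supseteq K$ large enough to contain $a$ and all the conjugates of $L$; concretely one can take $N$ to be the Galois closure of $L(a)$ over $K$. The product defining $\Nm_{L|K}$ may then be computed over the embeddings $L \hookrightarrow N$.

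For (i) $\Rightarrow$ (ii): assume $g(T) = \Nm_{L|K} h(T) = \prod_{\sigma\colon L \hookrightarrow N} (\sigma h)(T)$. Since $g(a) = 0$, one of the factors $(\sigma h)(T)$ must have $a$ as a root; fix such a $\sigma$. Then $a$ is a root of $(\sigma h)(T) \in \sigma(L)[T]$, so $\sigma(L) \subseteq \sigma(L)(a) \subseteq N$, and the minimal polynomial $p(T)$ of $a$ over $\sigma(L)$ divides $(\sigma h)(T)$. Taking degrees and using multiplicativity, $\deg g = [L:K]\deg h = [L:K]\deg(\sigma h)$. On the other hand, a counting argument — the $[L:K]$ embeddings $\sigma$ group according to which coset of $\Gal(N|K)$ over the stabilizer of $a$... — shows $\deg g = [K(a):K]$ while each $(\sigma h)$ has degree $\ge [\sigma(L)(a):\sigma(L)] = \deg p$; combining $\deg g \le [L:K]\deg p \le [L:K]\deg(\sigma h) = \deg g$ forces $\deg(\sigma h) = \deg p$, so $(\sigma h)(T) = p(T)$ is the minimal polynomial of $a$ over $\sigma(L)$, and in particular $\sigma(L) \subseteq K(a)$, so we may regard $\sigma$ as a $K$-embedding $L \hookrightarrow K(a)$. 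This is exactly (ii). (I should double-check the degree inequality $\deg g \le [L:K]\deg p$: it follows because $g(T) = \Nm_{L|K} p_0(T)$ for $p_0$ the minimal polynomial of some fixed root over $L$ itself, by transitivity of norms and the fact that $\Nm_{L|K}$ of an irreducible polynomial over $L$ whose root generates $K(a)$-type extensions is a power of $g$; in fact $g(T)^{?} = \Nm_{L|K}(p_0(T))$ — the cleanest route is just to invoke that $\Nm_{L|K}(\text{min poly of }a\text{ over }L)$ is a power of $g$, which is standard. Since $h$ is monic of the right degree this power is $1$.)

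For (ii) $\Rightarrow$ (iii): applying $\sigma^{-1}$ (defined on $\sigma(L)$), $h(T)$ is the minimal polynomial of $a$ over $L$ once we identify $L$ with $\sigma(L) \subseteq K(a)$ — more carefully, $(\sigma h)$ is irreducible over $\sigma(L)$, hence $h$ is irreducible over $L$; and $(\sigma h) \mid g$ in $\sigma(L)[T]$ since $(\sigma h)$ is the minimal polynomial of $a$ and $g(a)=0$, hence $h \mid g$ in $L[T]$. Finally $\deg g = [K(a):K] = [\sigma(L):K]\,[K(a):\sigma(L)] = [L:K]\deg(\sigma h) = [L:K]\deg h$. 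For (iii) $\Rightarrow$ (i): write $g = h \cdot r$ in $L[T]$ with both factors monic. Apply $\Nm_{L|K}$: since $g \in K[T]$, $\Nm_{L|K}(g) = g^{[L:K]}$, so $g^{[L:K]} = \Nm_{L|K}(h)\cdot \Nm_{L|K}(r)$ in $K[T]$. Because $g$ is irreducible in $K[T]$ and $K[T]$ is a UFD, $\Nm_{L|K}(h) = g^{j}$ for some $0 \le j \le [L:K]$. Comparing degrees, $[L:K]\deg h = j \deg g$, and by (iii) $\deg g = [L:K]\deg h$, so $j=1$ and $\Nm_{L|K}(h) = g$, which is (i).

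For the ``moreover'' clause: by (ii), $(\sigma h)(T) \in \sigma(L)[T]$ is the minimal polynomial of $a$ over $\sigma(L)$, hence its coefficients generate $\sigma(L)$ over $K$ — indeed $K(a)$ has degree $[L:K]\deg h$ over $K$ and degree $\deg h$ over $\sigma(L)$, so $[\sigma(L):K] = [L:K]$ is forced to equal the degree of the subfield of $K(a)$ generated by the coefficients of the monic irreducible polynomial $(\sigma h)$ of $a$ (any smaller field would make $a$ satisfy the same polynomial over a larger extension, contradicting the degree count). Pulling back via $\sigma^{-1}$, the coefficients of $h(T)$ generate $L$ over $K$. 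The main obstacle I anticipate is not any single implication but getting the degree bookkeeping in (i) $\Rightarrow$ (ii) airtight — in particular justifying cleanly that $\Nm_{L|K}$ applied to the minimal polynomial of $a$ over $L$ yields a power of $g$, and that monicity plus the hypothesis pins the exponent to $1$; once that normalization is in hand, everything else is a short degree computation and the identification of minimal polynomials.
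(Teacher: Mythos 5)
Your proof is correct and follows the same chain (i)\,$\Rightarrow$\,(ii)\,$\Rightarrow$\,(iii)\,$\Rightarrow$\,(i) as the paper, with one implication taking a genuinely different route. For (iii)\,$\Rightarrow$\,(i), the paper picks a root $b$ of $h$ in a splitting field $N$ of $g$, observes $g$ is the minimal polynomial of $b$ over $K$ and $(\Nm_{L|K}h)(b)=0$, so $g \mid \Nm_{L|K}h$, and closes with a degree count; you instead factor $g=h\cdot r$ in $L[T]$, apply $\Nm_{L|K}$ to get $g^{[L:K]}=\Nm_{L|K}(h)\Nm_{L|K}(r)$, and use unique factorization in $K[T]$ to force $\Nm_{L|K}(h)=g^{j}$, then match degrees to get $j=1$. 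Both are valid; the paper's version avoids invoking multiplicativity of the polynomial norm, while yours avoids introducing an auxiliary root.

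On (i)\,$\Rightarrow$\,(ii): the paper first shows $h$ is irreducible (if $d\mid h$ properly then $\Nm_{L|K}d$ properly divides $g$, contradiction), after which $(\sigma_i h)$ is automatically the minimal polynomial of $a$ over $\sigma_i(L)$ and the degree tower \eqref{eqn:degarg} closes the argument. Your version instead lets $p$ be the minimal polynomial of $a$ over $\sigma(L)$ and squeezes $\deg g \le [L:K]\deg p \le [L:K]\deg(\sigma h)=\deg g$. The parenthetical you inserted to justify $\deg g \le [L:K]\deg p$ — invoking that $\Nm_{L|K}$ of a minimal polynomial is ``a power of $g$'' — is both unjustified as stated and unnecessary: the inequality is immediate from the tower $K \subseteq \sigma(L) \subseteq \sigma(L)(a)$ together with $K(a)\subseteq \sigma(L)(a)$, giving $\deg g=[K(a):K]\le[\sigma(L)(a):K]=[L:K]\deg p$. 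Once you also note that the resulting equality $[K(a):K]=[\sigma(L)(a):K]$ forces $K(a)=\sigma(L)(a)$ and hence $\sigma(L)\subseteq K(a)$ (a step you skip past with ``in particular''), the argument is complete and equivalent to the paper's. Your (ii)\,$\Rightarrow$\,(iii) and the ``moreover'' clause match the paper's reasoning.
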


\begin{proof} Throughout, let $N$ be a splitting field of $g(T)$ over $K$.

  We start with (i) $\Rightarrow$ (ii).
  Suppose that $g(T)=\Nm_{L|K} h(T)$ with $h(T) \in L[T]$.
  We first claim that $h(T)$ is irreducible in $L[T]$: if $d(T) \mid h(T)$ with $d(T) \in L[T]$ monic of positive degree, then $\Nm_{L|K} d(T) \mid \Nm_{L|K} h(T) = g(T)$ with $\Nm_{L|K} d(T) \in K[T]$; but $g(T)$ is irreducible in $K[T]$, so equality holds; and then by comparison of degrees we conclude that $d(T)=h(T)$.
  Next, let $\{\sigma_i\}_i = \Hom_K(L,N)$.
  Since $g(a)=0$ and $g(T)=\prod_i (\sigma_i h)(T)$, there exists $i$ such that $(T-a) \mid (\sigma_i h)(T)$.  Since $h(T)$ is irreducible in $L$, we conclude $h'(T) \colonequals (\sigma_i h)(T)$ is irreducible in $L' \colonequals \sigma_i(L)$ and so $h'(T)$ is the minimal polynomial of $a$ over $L'$.  Thus
  \begin{equation} \label{eqn:degarg}
    \begin{aligned}
      [K(a) : K] &= \deg g(T) = \deg h(T) [L:K] =\deg h'(T)[L':K] \\
                 &= [L'(a):L'] [L':K] = [L'(a):K];
    \end{aligned}
  \end{equation}
  since $L'(a) \supseteq K(a)$, by \eqref{eqn:degarg} we have $L'(a)=K(a)$ so $L' \subseteq K(a)$, and we may take $\sigma=\sigma_i$ in (ii).

  We now prove (ii) $\Rightarrow$ (iii).
  Since $g(T)$ is the minimal polynomial of $a$ over $K$ and $(\sigma h)(T)$ is the minimal polynomial of $a$ over $\sigma(L)$ we have $(\sigma h)(T) \mid g(T)$ in $\sigma(L)[T]$ so $h(T) \mid g(T)$ since $\sigma$ is a $K$-embedding.  Moreover,
\begin{equation}
  [K(a) : K] = \deg g(T)=\deg (\sigma h) (T) [\sigma(L):K] =\deg h(T) [L:K].
\end{equation}

  To conclude, we show (iii) $\Rightarrow$ (i). We are given $h(T) \mid g(T)$, so every root of $h(T)$ is a root of $g(T)$. The field $N$ contains all roots of $g(T)$ hence all roots of $h(T)$.  Let $b \in N$ be a root of $h(T)$, hence also of $g(T)$; since $g(T) \in K[T]$ is irreducible we conclude $g(T)$ is the minimal polynomial of $b$ over $K$.
  Let $n(T) \colonequals \Nm_{L|K} h(T) \in K[T]$.
  Then $n(b)=0$, so $g(T) \mid n(T)$.\
  But $\deg n(T)=[L:K]\deg h(T)=\deg g(T)$, so $g(T)=n(T)$ since both are monic.

  For the final statement, we may suppose (ii) holds and identify $L$ with its image in $K(a)$ under $\sigma$.  Let $L' \subseteq L$ be the subfield of $L$ generated by the coefficients of $h(T)$; then $[K(a):L']=[K(a):L]=\deg h(T)$ since $h(T)$ is irreducible, so $L'=L$.
\end{proof}

\begin{definition}
  Let $M \supseteq K$ be a finite separable extension, and let $g(T) \in K[T]$ be monic.  We say a polynomial $h(T) \in M[T]$ is \defi{normic} for $g(T) \in K[T]$ over $M$ if all of the following conditions hold:
  \begin{enumroman}
  \item $h(T)$ is monic;
  \item $h(T) \mid g(T)$; and
  \item $g(T)=\Nm_{L|K} h(T)$, where $L \subseteq M$ is generated over $K$ by the coefficients of $h(T)$.
  \end{enumroman}
\end{definition}

\begin{example}
If $h_1(T)$ is normic for $g(T)$ over $M$, with $L_1 \subseteq M$ the subfield generated by the coefficients of $h_1(T)$, and $K \subseteq L_2 \subseteq L_1$, then $h_2(T) \colonequals \Nm_{L_1|L_2} h_1(T)$ is also normic for $g(T)$ over $M$.
\end{example}

\begin{remark}
If $h(T)$ is normic for $g(T)$ over $M=K(a)$ and further $(T-a) \mid h(T)$, then van Heoij--Kl\"uners--Novocin call $h(T)$ the \defi{subfield polynomial} of $L$ \cite[Definition 5]{vHKN}; they state a version of Proposition \ref{prop:galthy} in their setting \cite[Remark 6]{vHKN}.  More recently, Szutkoski--van Hoeij \cite[Theorem 4]{SvH} have developed further equivalent conditions for subfield polynomials.
We will soon find ourselves in a situation that would be a very simple case of these algorithms, so we will not need to employ these more advanced techniques.
\end{remark}

We apply the previous bit of Galois theory as follows.

\begin{proposition} \label{prop:maxgal}
  Let $g(T) \in K[T]$ be monic, irreducible, and separable.
  Let $M \supseteq K$ be a finite separable extension.
  Then the following statements hold.
  \begin{enumalph}
  \item The set of normic polynomials for $g(T)$ over $M$ is a nonempty, partially ordered set under divisibility.
  \item Let $h_1(T) \mid h_2(T)$ be normic polynomials for $g(T)$ over $M$, and let $L_1, L_2 \subseteq M$ be the subfields generated over $K$ by the coefficients of $h_1(T),h_2(T)$, respectively.  Then $L_2 \subseteq L_1$.
  \end{enumalph}
\end{proposition}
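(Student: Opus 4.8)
The plan is to dispatch (a) with essentially no work and then reduce (b) to a statement about the orbit of a single, carefully chosen root of $g(T)$ under a Galois group. For (a), I would first note that $g(T)$ itself is normic for $g(T)$ over $M$: it is monic, it divides itself, and the subfield of $M$ generated over $K$ by its coefficients is $K$ (since $g(T)\in K[T]$), for which $\Nm_{K|K}(g(T))=g(T)$; this gives nonemptiness. Since every normic polynomial is monic and divisibility is reflexive, transitive, and antisymmetric on monic polynomials, it restricts to a partial order on the set of normic polynomials, finishing (a).

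For (b), the first step is to invoke the definition of normic together with the equivalence (i) $\Leftrightarrow$ (iii) of Proposition \ref{prop:galthy}: each $h_i(T)$ is irreducible in $L_i[T]$ and satisfies $\deg g(T)=[L_i:K]\deg h_i(T)$ for $i=1,2$. Next I would fix a root $b$ of $h_1(T)$ in $K^{\textup{sep}}$; since $h_1(T)\mid h_2(T)\mid g(T)$, this $b$ is a common root of $h_1(T)$, $h_2(T)$, and $g(T)$, and by irreducibility $h_i(T)$ is the minimal polynomial of $b$ over $L_i$ while $g(T)$ is the minimal polynomial of $b$ over $K$. A short degree count then yields $[L_i(b):K]=\deg h_i(T)\cdot[L_i:K]=\deg g(T)=[K(b):K]$, so $L_i(b)=K(b)$ and hence $L_1,L_2\subseteq K(b)$. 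Now I would pass to a Galois closure $N$ of $K(b)$ over $K$ --- which is just the splitting field of $g(T)$, so it contains every root of $h_1(T)$ and $h_2(T)$, and $L_1,L_2\subseteq K(b)\subseteq N$ --- and observe that the roots of $h_i(T)$ are precisely the orbit of $b$ under $\Gal{N}{L_i}$. The divisibility $h_1(T)\mid h_2(T)$ says the orbit for $i=1$ is contained in the orbit for $i=2$; unwinding this to $\Gal{N}{L_1}\subseteq\Gal{N}{L_2}$ and taking fixed fields gives $L_2\subseteq L_1$, as desired.

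The crux of the argument is that last implication, from orbit inclusion to subgroup inclusion, which is not formal: given $\sigma\in\Gal{N}{L_1}$, one writes $\sigma(b)=\tau(b)$ with $\tau\in\Gal{N}{L_2}$, so $\tau^{-1}\sigma$ fixes $b$ and thus lies in $\Gal{N}{K(b)}$, and the deduction $\sigma\in\tau\,\Gal{N}{L_2}=\Gal{N}{L_2}$ goes through only because $\Gal{N}{K(b)}\subseteq\Gal{N}{L_2}$ --- that is, only because $L_2\subseteq K(b)$, the inclusion extracted above from the normic degree identity $\deg g(T)=[L_2:K]\deg h_2(T)$. It is worth recording that a more elementary-looking route, say computing the minimal polynomial of $b$ over the compositum $L_1L_2$, runs in a circle, which is why I would route the proof through the Galois closure. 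Two small points to check along the way: separability of $g(T)$ is genuinely used, since it makes $h_1(T)$ and $h_2(T)$ separable so that ``every root of $h_1$ is a root of $h_2$'' faithfully encodes $h_1(T)\mid h_2(T)$; and identifying $N$ with the splitting field of $g(T)$ is legitimate because $g(T)$ is the minimal polynomial of $b$ over $K$.
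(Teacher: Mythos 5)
Your proof is correct, and part (a) is identical to the paper's. For part (b), both you and the paper ultimately prove the same group inclusion $\Gal{N}{L_1}\subseteq\Gal{N}{L_2}$ inside the splitting field $N$ of $g(T)$ and then appeal to the Galois correspondence, but the intermediate arguments differ in a genuine way. You argue directly: fix a root $b$ of $h_1(T)$, use the normic degree identity to obtain the key inclusions $L_i\subseteq K(b)$, identify the roots of $h_i(T)$ with the $\Gal{N}{L_i}$-orbit of $b$, and then convert the orbit containment coming from $h_1\mid h_2$ into the subgroup containment (the step you correctly flag as nontrivial, since it leans on $L_2\subseteq K(b)$). The paper argues by contradiction and sidesteps any reference to a specific root: from $g(T)=\Nm_{L_2|K}h_2(T)$ and the separability of $g(T)$, the Galois conjugates $(vh_2)(T)$ of $h_2(T)$ are pairwise coprime; so if $\sigma\in\Gal{N}{L_1}\smallsetminus\Gal{N}{L_2}$, then $(\sigma h_2)(T)$ is coprime to $h_2(T)$, yet $h_1(T)=(\sigma h_1)(T)$ divides both, which is absurd. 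The paper's version is shorter and exhibits the separability hypothesis doing its work at a single visible point; yours is more explicit about the underlying Galois orbit structure and makes the role of the normic degree identity (giving $L_2\subseteq K(b)$) transparent. One small imprecision in your commentary: you say separability is needed so that ``every root of $h_1$ is a root of $h_2$'' faithfully encodes $h_1\mid h_2$, but your argument only uses the forward implication, which holds without separability; what separability actually buys you is that the polynomials $h_i$ are separable and that the Galois-theoretic orbit description of their root sets is clean.
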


\begin{proof}
  For part (a), the set is nonempty by taking $h(T)=g(T)$ (and $L=K$), and divisibility clearly gives a partial ordering.

  Now part (b).  Let $N$ be a splitting field for $g(T)$ over $K$, let $G \colonequals \Gal{N}{K}$ and $H_i \colonequals \Gal{N}{L_i}$ for $i=1,2$.  Let $\sigma \in H_1 \backslash H_2$.  Since $h_2(T)$ is normic for $g(T)$ and $g(T)$ is separable, $(\sigma h_2)(T)$ is coprime to $h_2(T)$.  But since $\sigma \in H_1$, we have
  \[ h_1(T)=(\sigma h_1)(T) \mid (\sigma h_2)(T), \]
  a contradiction.  So $H_1 \subseteq H_2$ and by the Galois correspondence $L_2 \subseteq L_1$.
\end{proof}

\begin{remark}\label{rmk:irreduciblenormicfactors}
Proposition~\ref{prop:maxgal}(a) does not assure the existence of an irreducible normic factor over $M$.
  For example, let $g(T) \in \Q[T]$ have degree $4$ and Galois group $\operatorname{Gal}(g(T)) = S_4$.
  Let $N$ be the splitting field of $g(T)$ over $\Q$ and let $M$ be the subfield of $N$ of degree $6$ fixed by the subgroup $H=\langle (1\,2), (3\,4) \rangle < S_4$.
  The polynomial $g(T)$ factors over $M[T]$ as a product of two irreducible degree-2 polynomials.
  By Proposition~\ref{prop:galthy}(iii), we conclude that neither factor can be normic, as $M$ does not have an intermediate field of degree 2.
  Indeed, the field generated by the coefficients of either factor is $M$ itself.
\end{remark}

\begin{remark}
  In Proposition~\ref{prop:maxgal}(b), the converse need not hold.
  For example, suppose that $M \colonequals K(a) \supseteq K$ is Galois, where $a \in K^{\textup{sep}}$ is a root of $g(T)$.
  Then $g(T)$ splits in $M$ and any linear factor generates $M$.
\end{remark}

\section{Splitting of reductions of abelian varieties} \label{sec:splitred}

In this section, we set up some notation and describe some results from Zywina \cite{zywina-14} concerning splitting of reductions of abelian varieties.  See also Costa--Mascot--Sijsling--Voight \cite{costa-mascot-sijsling-voight-18} for a summary in an algorithmic context.

We begin with a bit of notation.  Let $F$ be a number field with algebraic closure $F^{\alg}$ and let $\jvGal_F \colonequals \Gal{F^{\alg}}{F}$.  Let $A$ be an abelian variety over $F$ of dimension $g$ and let $A^{\alg} \colonequals A \times_F F^{\alg}$ denote the base change of $A$ to $F^{\alg}$.  Suppose that $A^{\alg}$ is isogenous to a power of a simple abelian variety (over $F^{\alg}$---ultimately, in algorithmic applications we will reduce to this case \cite[Remark 7.4.10]{costa-mascot-sijsling-voight-18}).  We write $\End(A)$ for the ring of endomorphisms of $A$ defined over $F$ and $\End(A)_\Q \colonequals \End(A) \otimes_{\Z} \Q$; if $K \supseteq F$ is an extension, we will write $\End(A_K)$ for the ring of endomorphisms defined over $K$.  Let $B \colonequals \End(A^{\alg})_\Q$ be the geometric endomorphism algebra of $A$, and
let $L \colonequals Z(B)$ be the center of $B$.  Then $L$ is a number field and $B$ is a central simple algebra over $L$.
Let $m^2 \colonequals \dim_L B$ with $m \in \Z_{\geq 1}$, so that $\dim_\Q B=m^2[L:\Q]$.

For a prime $\frakp$ of $F$, write $\F_\frakp$ for its residue field and $q \colonequals \#\F_\frakp$, let
$\F_\frakp^{\alg}$ be the algebraic closure of $\F_\frakp$, and let $\Frob_\frakp$ be the Frobenius automorphism of $\F_\frakp^{\alg}$ fixing $\F_\frakp$.  For $\frakp$ a prime of good reduction for $A$, write $A_\frakp$ for the reduction of $A$ over the residue field $\F_\frakp$ and $A_\frakp^{\alg}$ for the base change of $A_\frakp$ to $\F_\frakp^{\alg}$.

Let $\ell$ be a prime number.  Let $T_\ell A$ be the $\ell$-adic Tate module of $A$, a free $\Z_\ell$-module of rank $2g$.  Let $V_\ell A \colonequals T_\ell A \otimes_{\Z_\ell} \Q_\ell$; then there is a continuous homomorphism
\[ \rho_{A,\ell} \colon \jvGal_F \to \GL(V_\ell(A)) \simeq \GL_{2g}(\Q_\ell). \]
For a prime $\frakp$ of good reduction of $A$ that is coprime to $\ell$, let
\begin{equation} \label{eqn:cptis}
c_\frakp (T)  \colonequals \det(1 - \rho_{A,\ell}(\Frob_\frakp) T) \in 1+T\Z[T]
\end{equation}
be the inverse characteristic polynomial of the Frobenius $\Frob_\frakp$.  Then $c_\frakp(T)$ is independent of $\ell$.  Indeed, $c_\frakp(T)$ is the factor of the zeta function of $A_\frakp$ whose reciprocal roots have complex absolute value $\sqrt{q}$.  Thereby, $c_\frakp(T)$ can be recovered from the point counts $\#A(\F_{q^r})$ for $r=1,\dots,\max(2g, 18)$ \cite[\S 8]{kedlaya-quantum}; when $A\sim\Jac(C)$ is isogenous to the Jacobian of a curve $C$, one can also recover $c_{\frakp}(T)$ from $\#C(\F_{q^r})$ for $r=1,\ldots,g$.

Let $\bfGL(V_\ell(A))$ be the $\Q_\ell$-algebraic group of ($\Q_\ell$-linear) automorphisms of $V_\ell(A)$. The absolute Galois group $\jvGal_F$ acts by linear automorphisms on $V_\ell(A)$, hence we have $\rho_{A,\ell}(\jvGal_F) \leq \GL(V_\ell(A))= \bfGL(V_\ell(A))(\Q_\ell)$.  Let $\bfG_{A,\ell}$ be the Zariski closure of $\rho_{A,\ell}(\jvGal_F)$ in $\bfGL(V_\ell(A))$.  Then $\bfG_{A,\ell} \leq \bfGL(V_\ell(A))$ is an algebraic subgroup called the \defi{$\ell$-adic monodromy group} of $A$.  Let $\bfG_{A,\ell}^0$ be the identity component of $\bfG_{A,\ell}$.  Let $F_A^{\textup{conn}}$ be the fixed field in $F^{\alg}$ of $\rho_{A,\ell}^{-1}(\bfG_{A,\ell}^{0}(\Q_\ell))$.  Then $F_A^{\textup{conn}}$ is a finite Galois extension of $F$, independent of $\ell$ by a result of Serre \cite[p.\ 17]{serre-IV}.  The field $F_A^{\textup{conn}}$ is the smallest extension of $F$ for which the $\ell$-adic monodromy groups are connected for all primes $\ell$.

Choose an embedding $F \hookrightarrow \C$.  Let $V \colonequals H_1(A(\C),\Q)$; then $V_\C \colonequals V \otimes \C$ has a Hodge decomposition of type $\{(-1,0),(0,-1)\}$.  Let $\mu \, \colon \bfG_{m,\C} \to \bfGL(V_\C)$ be the cocharacter such that $\mu(z)$ acts as multiplication by $z$ on $V^{-1,0}$ and as the identity of $V^{0,-1}$ for all $z \in \C^\times=\bfG_{m,\C}(\C)$.  The \defi{Mumford--Tate group} of $A_\C$, denoted $\bfG_A$, is the smallest algebraic subgroup of $\bfGL(V)$ defined over $\Q$ such that $\bfG_A(\C)$ contains $\mu(\C^\times)$; then $\bfG_A$ is a reductive group over $\Q$ that is independent of the choice of embedding of $F$ into $\C$.

\begin{conjecture}[Mumford--Tate] \label{conj:MT}
The comparison isomorphism $V \otimes \Q_\ell \xrightarrow{\sim} V_\ell(A)$ identifies $\bfG_A \times_\Q \Q_\ell$ with $\bfG_{A,\ell}^0$.
\end{conjecture}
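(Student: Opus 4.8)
The statement is the Mumford--Tate conjecture, which is open in general, so what follows is a description of the strategy by which it is approached rather than a proof: the plan is to establish the two inclusions of algebraic groups separately. The inclusion $\bfG_{A,\ell}^0 \subseteq \bfG_A \times_\Q \Q_\ell$ is unconditional. I would use the tensor characterization of the Mumford--Tate group --- $\bfG_A$ is the stabilizer in $\bfGL(V)$ of the collection of Hodge classes in all the spaces $V^{\otimes a}\otimes(V^\vee)^{\otimes b}$ --- together with Deligne's theorem that Hodge classes on abelian varieties are absolutely Hodge. The latter implies that the $\ell$-adic realization of each such Hodge class is fixed by an open subgroup of $\jvGal_F$, hence by $\rho_{A,\ell}^{-1}\!\big(\bfG_{A,\ell}^0(\Q_\ell)\big)$, and therefore, under the comparison isomorphism, by $\bfG_{A,\ell}^0$. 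Since $\bfG_{A,\ell}^0$ thus fixes all of these classes, it is contained in their common stabilizer $\bfG_A\times_\Q\Q_\ell$.

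For the reverse inclusion $\bfG_A\times_\Q\Q_\ell \subseteq \bfG_{A,\ell}^0$ --- the actual content of the conjecture --- the plan is to combine the available structural results. By Faltings' proof of the Tate conjecture for abelian varieties over number fields, $V_\ell(A)$ is a semisimple $\jvGal_F$-module, so $\bfG_{A,\ell}^0$ is reductive, and the commutant of $\rho_{A,\ell}(\jvGal_{\FAconn})$ --- equivalently of $\bfG_{A,\ell}^0$ --- in $\End(V_\ell(A))$ is $B\otimes_\Q\Q_\ell$, matching the commutant $B$ of $\bfG_A$ in $\End(V)$. By Bogomolov the image $\rho_{A,\ell}(\jvGal_F)$ is open in $\bfG_{A,\ell}(\Q_\ell)$, and by Serre the absolute rank of $\bfG_{A,\ell}^0$ and its component group $\Gal{\FAconn}{F}$ are independent of $\ell$; moreover $p$-adic Hodge theory shows that the Hodge--Tate cocharacter of $V_\ell(A)$ lands in $\bfG_{A,\ell}^0$ and is conjugate to (the base change of) the Hodge cocharacter $\mu$. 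Finally, since $V_\ell(A)$ is a faithful representation of $\bfG_{A,\ell}^0$ whose weights with respect to this cocharacter are only $0$ and $1$ --- that is, the representation is minuscule with respect to $\mu$ --- I would invoke Pink's classification of the connected reductive subgroups of $\bfGL_{2g}$ that carry such a minuscule coweight, and combine it with the matching commutants and ranks to force $\bfG_{A,\ell}^0 = \bfG_A\times_\Q\Q_\ell$.

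The main obstacle is exactly this last step: the minuscule and numerical constraints do not single out $\bfG_{A,\ell}^0$ uniquely once $A$ carries nontrivial geometric endomorphisms, or once $\dim A$ takes one of the exceptional values where several reductive groups fit the same invariants, and pinning the group down amounts to controlling $\ell$-adic Tate classes of codimension $>1$ --- that is, to instances of the Tate conjecture that go well beyond Faltings' theorem for divisors. For this reason the conjecture is currently known only in restricted settings: CM abelian varieties (Pohlmann, using Shimura--Taniyama theory), $\dim A\le 3$, $\dim A$ prime, the ``maximal Hodge group'' and semistable cases (Serre, Chi, Tankeev, Pink, Vasiu), and various families organized by the Albert type of $B$ (Banaszak--Gajda--Kraso\'n, Cadoret--Moonen, Lombardo). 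Accordingly, in the present article the conjecture is adopted as a hypothesis, and all subsequent results are stated conditionally on it.
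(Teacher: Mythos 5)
You correctly recognize that this statement is the Mumford--Tate conjecture, that it is open in general, and that the paper states it only as a conjecture to be assumed as a hypothesis rather than proving it --- so there is no proof in the paper to compare against. Your survey is accurate and well-organized: the unconditional inclusion $\bfG_{A,\ell}^0 \subseteq \bfG_A \times_\Q \Q_\ell$ via Deligne's theorem on absolute Hodge classes, the reverse inclusion as the genuine content, and the supporting structural inputs (Faltings' semisimplicity and Tate's conjecture for endomorphisms, Bogomolov's openness, Serre's $\ell$-independence of rank and component group, the minuscule Hodge--Tate cocharacter, and Pink's classification) are all correctly identified, as is the correct list of known special cases. Since the paper does nothing beyond stating the conjecture and invoking it, the right answer here is precisely what you gave: flag it as an assumed hypothesis and record the state of the art, rather than attempt a proof.
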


Let $\T \subset \GA$ be a maximal torus and $X(\T)$ be its character group. We write $\rk \G_A$ for the rank of $\G_A$ (i.e., the dimension of $\T$).
The \defi{absolute Weyl group} of $\GA$ with respect to $\T$ is the (finite) group
 \[
 W(\GA,\T) := N_{\GA}(\T)(\C)/\T(\C),
 \]
 where $N_{\GA}(\T)$ is the normalizer of $\T$ in $\GA$.  For an element $g\in N_{\GA}(\T)(\C)$, the map $\T_{\C}\to \T_{\C}$ given by $t\mapsto gtg^{-1}$ is an isomorphism of groups that depends only on the image of $g$ in $W(\GA,\T)$; this induces a faithful action of $W(\GA,\T)$ on $\T_{\C}$, so that we can identify $W(\GA,\T)$ with a subgroup of $\Aut(\T_{\C})$ and hence also of $\Aut(X(\T))$.

Any element $t \in \T(\C) \leq \GA(\C)$ acts on $V_\C$, and $\det(T-t)=\prod_{\alpha \in \Omega} (T-\alpha(t))^{m_\alpha}$, where $\Omega \subset X(\T)$ is the set of weights of $\T$ appearing in the representation $\GA \to \GL(V_\C)$ and the $m_\alpha$ are the corresponding multiplicities. As $W(\GA,\T)$ acts on $X(\T)$ stabilizing $\Omega$, in particular we obtain an action of $W(\GA,\T)$ on $\Omega$, hence on the roots of the characteristic polynomial of $t$.

We also recall the definition of the splitting field of $\G_A$.

\begin{definition}
The \defi{splitting field} of $\G_A$, denoted $F_{\G_A}$, is the intersection of all fields $K \subseteq \Q^{\alg}$ such that $\G_A \times_\Q K$ is split as a reductive group.
\end{definition}

The field $F_{\G_A}$ is a finite Galois extension of $\Q$.  With this notation in hand, we now introduce our set of primes.

\begin{definition} \label{def:descriptionofset}
Let $S$ be the set of primes $\frakp$ of $F$ with the following properties:
\begin{enumroman}
\item The prime $\frakp$ is a prime of good reduction for $A$;
\item $\Nm(\frakp)$ is prime, i.e., the residue field $\#\F_\frakp$ has prime
  cardinality;
\item $\End(A_\frakp^{\alg})$ is defined over $\F_\frakp$;
\item We have an isogeny $A_{\frakp} \sim Y_\frakp^{m}$ over $\F_\frakp$, with $Y_\frakp$ simple; and
\item The algebra $\End(Y_\frakp)_\Q$ is a field, generated by the Frobenius endomorphism.
\end{enumroman}
Let $\setplaces$ be the set of primes $\frakp$ satisfying (i)--(v) and
\begin{enumroman}
\item[(vi)] The roots of $c_{\frakp}(T)$ (defined in \eqref{eqn:cptis}) generate a free subgroup $\Phi_{\frakp} \leq (\Q^{\alg})^\times$ of rank equal to $\rk \G_A$.
\end{enumroman}
\end{definition}

We have $\setplaces \subseteq S$.  Given a model for $A$ (provided by equations in projective space), we consider the property:
\begin{enumroman}
\item[(i${}^{\prime}$)] The prime $\frakp$ is a prime of good reduction for the model of $A$.
\end{enumroman}
Let $S'$ be the set of primes satisfying (i${}^{\prime}$) and (ii)--(v) in Definition \ref{def:descriptionofset}.  The sets $S$ and $S'$ differ in only finitely many primes.  We define $\setplaces'$ similarly, satisfying (i${}^{\prime}$) and (ii)--(vi).

\begin{lemma} \label{lem:effectcomputS}
Given $m$ and a model for $A$, the set $S'$ is effectively computable.  If $\rk \G_A$ is also given, then $\setplaces'$ is effectively computable.
\end{lemma}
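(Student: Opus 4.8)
The plan is to reduce the statement to a membership test: since the primes of $F$ can be enumerated and each of the conditions defining $S'$ (resp.\ $\setplaces'$) depends only on $\frakp$, it suffices to give an algorithm that on input a prime $\frakp$ decides whether it lies in the set. Conditions (i${}^{\prime}$) and (ii) are immediately decidable --- for (i${}^{\prime}$) reduce the given projective equations modulo $\frakp$ and apply the Jacobian criterion (a Gr\"obner basis computation over $\F_\frakp$) to test smoothness of the expected dimension, and for (ii) factor $\Nm(\frakp)$. Assuming these hold, put $q\colonequals\#\F_\frakp$; the Weil polynomial $c_\frakp(T)\in 1+T\Z[T]$ is then effectively computable from the point counts $\#A_\frakp(\F_{q^r})$ for $1\le r\le\max(2g,18)$ --- each a finite enumeration on the reduction of the model --- by Newton's identities and the functional equation, as recalled after \eqref{eqn:cptis} (when $A\sim\Jac(C)$ one may instead use $\#C(\F_{q^r})$ for $r\le g$); this is effective, if not efficient. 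Writing $\alpha_1,\dots,\alpha_{2g}$ for the roots of $c_\frakp(T)$ with multiplicity, the Weil polynomial of $A_{\frakp,\F_{q^r}}$ is $c_{\frakp,r}(T)\colonequals\prod_i(T-\alpha_i^r)$, which is obtained from $c_\frakp(T)$ by a resultant and hence requires no further point counting.

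Conditions (iii)--(v) depend only on the $\F_\frakp$-isogeny class of $A_\frakp$ and of its base changes, which by the theorems of Tate and Honda are determined by --- and effectively read off from --- the corresponding Weil polynomials (see \cite{costa-mascot-sijsling-voight-18} for a summary in this algorithmic setting). Factor $c_\frakp(T)$ over $\Q$. For the given value of $m$, condition (iv) holds precisely when $c_\frakp(T)=h(T)^{me}$ for a single monic irreducible $h(T)$ that is the minimal polynomial of a Weil $q$-number $\pi$ and $e$ is the Schur index over $\Q(\pi)$ of the division algebra whose local Brauer invariants are those prescribed by Honda--Tate (these invariants, hence $e$, being computable from $h$, $\pi$, and $q$); in that case $Y_\frakp$ is the simple abelian variety attached to $h(T)^e$. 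Given (iv), condition (v) holds if and only if $e=1$, in which case $\End(Y_\frakp)_\Q=\Q(\pi)$ is a field, automatically generated by the Frobenius endomorphism $\pi$.

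For (iii) one uses that there is an effectively computable bound $r_0=r_0(g)$, depending only on $g=\dim A$, such that all geometric endomorphisms of any $g$-dimensional abelian variety over a finite field become defined over the degree-$r_0$ extension; hence $\End(A_\frakp^{\alg})_\Q=\End(A_{\frakp,\F_{q^{r_0}}})_\Q$, a $\Q$-algebra computed from $c_{\frakp,r_0}(T)$ by the Honda--Tate dictionary. Comparing $\Z$-orders --- if $\End(A_\frakp)_\Q=\End(A_\frakp^{\alg})_\Q$, then any $\phi\in\End(A_\frakp^{\alg})$ has $n\phi\in\End(A_\frakp)$ for some $n\ge 1$, so $n\,\sigma(\phi)=\sigma(n\phi)=n\phi$ and hence $\sigma(\phi)=\phi$ for every $\sigma\in\Gal{\F_\frakp^{\alg}}{\F_\frakp}$, i.e.\ $\phi\in\End(A_\frakp)$ --- shows that $\End(A_\frakp^{\alg})$ is defined over $\F_\frakp$ if and only if $\dim_\Q\End(A_\frakp)_\Q=\dim_\Q\End(A_{\frakp,\F_{q^{r_0}}})_\Q$, where the left-hand side is computed from $c_\frakp(T)$. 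This proves that $S'$ is effectively computable.

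For $\setplaces'$ one must further decide (vi) using the given integer $\rk\G_A$, namely whether the roots $\alpha_i$ generate a subgroup of $(\Q^{\alg})^\times$ of rank exactly $\rk\G_A$. The $\alpha_i$ lie in the (computable) \'etale $\Q$-algebra $\Q[T]/(c_\frakp(T))$, and the rank of the multiplicative group generated by finitely many nonzero algebraic numbers is effectively computable: the contribution of the finite places is recorded by factoring the principal ideals the $\alpha_i$ generate and taking the rank of the resulting integer matrix, after which one is reduced to deciding multiplicative relations among units of a number field, which is effective. Comparing this rank with $\rk\G_A$ decides (vi), so $\setplaces'$ is effectively computable as well. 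The two ingredients that are not purely formal here are the uniform bound $r_0(g)$ on the field of definition of geometric endomorphisms of abelian varieties over finite fields --- which is exactly what turns condition (iii) into a finite check --- and the effectivity of the multiplicative-rank computation underlying (vi); all the rest is bookkeeping with point counts and the Honda--Tate classification.
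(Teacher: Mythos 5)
Your proposal is correct, but it takes genuinely different routes for conditions (iii) and (iv)--(v) than the paper does.

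For condition (iii), the paper computes the characteristic polynomial $c_\frakp^{\otimes 2}(T)$ of $\rho_{A,\ell}^{\otimes 2}(\Frob_\frakp)$ and verifies that the only reciprocal roots of the form $p\zeta$ with $\zeta$ a root of unity have $\zeta=1$ --- a direct test coming from the proven Tate conjecture. You instead invoke an effective bound $r_0(g)$ on the degree of the extension over which all geometric endomorphisms become defined, pass to the Weil polynomial of $A_{\frakp,\F_{q^{r_0}}}$, and compare endomorphism-algebra dimensions via Tate's theorem together with a Galois-descent argument at the level of orders. Both work; your route needs the extra (true, but not completely standard) bound $r_0(g)$, which ultimately rests on the observation that any root of unity among the ratios $\alpha_i/\alpha_j$ has order bounded in terms of the degree of the splitting field of $c_\frakp$. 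The paper's $\otimes 2$ test sidesteps this entirely.

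For conditions (iv)--(v), the paper exploits the fact that over a \emph{prime} field (condition (ii)), once (iii) is established $c_\frakp(T)$ has no real roots, and then Honda--Tate forces the Schur index of any simple isogeny factor to be $1$; thus (iv) and (v) together reduce to the single check that $c_\frakp(T)$ is the $m$th power of an irreducible polynomial. You carry the Schur index $e$ through the Honda--Tate dictionary as an unknown, writing $c_\frakp(T)=h(T)^{me}$ with $e$ computed from the local Brauer invariants, and test $e=1$ as a separate step for (v). This is correct but redundant: the residue field being prime, together with (iii), already gives $e=1$ for free. Your treatment of (vi) via $p$-units and integer linear algebra is essentially identical to the paper's.

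One small looseness: you say the roots $\alpha_i$ ``lie in the étale $\Q$-algebra $\Q[T]/(c_\frakp(T))$''; they lie in the splitting field of $c_\frakp$, and the rank computation should be carried out there. This is a phrasing slip, not a gap.
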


\begin{proof}
Condition (i${}^{\prime}$) can be checked by ensuring the model is smooth. We can check (ii) using standard algorithms, and we let $p \colonequals \#\F_\frakp$.  For such $\frakp$, we compute $c_\frakp(T)$ using a model of $A$ by counting $\#A(\F_{\frakp^r})$ as above.
(A finite list of primes containing those of bad reduction and the ability to compute $c_\frakp(T)$ for each good prime $\frakp$ are all we need from a model.) We can check conditions (iii), (iv), and (v) as follows.

For properties (iii)--(iv), we refer to Costa--Mascot--Sijsling--Voight \cite[Lemma 7.2.7]{costa-mascot-sijsling-voight-18} and Zywina~\cite[Lemma~2.1]{zywina-14} for details; we indicate only the key points here.  To verify (iii) we use the (proven) Tate conjecture: letting $c_\frakp^{\otimes 2}(T)$ be the characteristic polynomial of $\rho_{A,\ell}^{\otimes 2}(\Frob_\frakp)$, we verify that the only reciprocal roots of $c_\frakp^{\otimes 2}$ of the form $p\zeta$ with $\zeta$ a root of unity in fact have $\zeta=1$.  For (iv), we recall from Honda--Tate theory that an abelian variety over a prime finite field whose characteristic polynomial of Frobenius has no real roots is simple if and only if this polynomial is irreducible if and only if its endomorphism algebra is a field, generated by the Frobenius endomorphism.  With (iii) established, it follows that $c_\frakp(T)$ has no real roots: indeed, otherwise it would be divisible by $1 - p T^2$, but then $-p=(-\sqrt{p})(\sqrt{p})$ would be a root of $c_\frakp^{\otimes 2}(T)$.  We then verify (iv) and (v) by checking that $c_\frakp(T) \in \Q[T]$ is the $m$th power of an irreducible polynomial (in $\Q[T]$) --- this is where we use $m$.

To conclude, we claim that condition (vi) can be checked effectively if $\operatorname{rk} \G_A$ is known.  Let $N$ be a splitting field for $c_\frakp$; then the reciprocal roots of $c_\frakp$ are algebraic integers that are $p$-units in $N$, i.e., their valuation at any prime that does not lie above $p$ is $0$.  The unit group $\Z_{N}[1/p]^\times$ is a finitely generated abelian group.  Moreover, there is an effectively computable isomorphism from the set of elements of $N$ that belong to $\Z_{N}[1/p]^\times$ to an abstract finitely generated abelian group defined by a minimal set of generators and relations (see e.g.\ Cohen \cite[\S 7.4]{cohen:advanced}).  We then apply this isomorphism to the reciprocal roots of $c_\frakp(T)$, and by linear algebra over $\Z$ (Smith normal form) we compute a minimal presentation for the subgroup they generate and thereby check if this subgroup is free of the correct rank.
\end{proof}

\begin{lemma} \label{lem:guessm}
Suppose that the Mumford-Tate conjecture holds for $A$. Then given a model for $A$, the rank $\rk \G_A$ of the Mumford--Tate group and $m$ are effectively computable.  \end{lemma}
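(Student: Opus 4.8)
The plan is to extract both $\rk\G_A$ and $m$ from the Frobenius polynomials $c_\frakp(T)$, which by the proof of Lemma~\ref{lem:effectcomputS} can be computed from the given model.

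First I would compute $\rk\G_A$. By the Mumford--Tate conjecture we have $\G_A \times_\Q \Q_\ell \cong \G_{A,\ell}^0$, so $\rk\G_A = \rk\G_{A,\ell}^0 = \rk\G_{A,\ell}$. For a prime $\frakp$ of good reduction with $\Nm(\frakp)$ prime, let $r_\frakp$ be the free rank of the subgroup of $(\Q^{\alg})^\times$ generated by the roots of $c_\frakp(T)$; this is computable, as in the discussion of condition~(vi) in the proof of Lemma~\ref{lem:effectcomputS}. I claim $\rk\G_A = \max_\frakp r_\frakp$ over such primes, with the maximum attained. For the inequality $r_\frakp \le \rk\G_A$: since the roots of $c_\frakp(T)$ are Weil numbers, $\rho_{A,\ell}(\Frob_\frakp)$ is semisimple, so some power $\rho_{A,\ell}(\Frob_\frakp)^N$ (with $N \mid [F_A^{\conn}:F]$) lies in a maximal torus $\T'$ of $\G_{A,\ell}^0$; its eigenvalues are then the values at that power of the weights of $\T'$ on $V_\ell(A)$, hence generate a subgroup of $(\Q^{\alg})^\times$ of free rank at most $\rk\T' = \rk\G_{A,\ell}$, and passing to a power does not change the free rank. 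For attainment: $r_\frakp = \rk\G_A$ for every $\frakp \in \setplaces$ (this is condition~(vi)), and $\setplaces$ has positive density. Finally I would record the a priori bound $\rk\G_A \le g+1$: fixing a polarization of $A$, the Mumford--Tate group preserves the associated symplectic form up to scalars, so $\G_A \subseteq \GSp_{2g}$, which has rank $g+1$.

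Next I would compute $m$ from the multiplicity in the factorization of $c_\frakp(T)$. For a prime $\frakp$ of good reduction with $\Nm(\frakp)$ prime satisfying the Tate-conjecture check of Lemma~\ref{lem:effectcomputS} (so that the geometric and arithmetic endomorphism algebras of $A_\frakp$ agree, and in particular $c_\frakp(T)$ has no real roots), Honda--Tate theory gives $c_\frakp(T) = h_\frakp(T)^{e_\frakp}$ with $h_\frakp(T) \in \Q[T]$ irreducible; this $e_\frakp$ is computable by factoring $c_\frakp(T)$ over $\Q$. I claim $m \mid e_\frakp$. Indeed, the reduction map $B = \End(A^{\alg})_\Q \hookrightarrow \End(A_\frakp^{\alg})_\Q$ is a unital injection of semisimple $\Q$-algebras, and after $\otimes_\Q \Qbar_\ell$ both sides become products of matrix algebras over $\Qbar_\ell$, of sizes $m$ and $e_\frakp$ respectively (because $\dim_L B = m^2$ and $\dim_{M(\frakp)}\End(A_\frakp^{\alg})_\Q = e_\frakp^2$, using that $c_\frakp(T)$ is the characteristic polynomial of Frobenius on a power of a simple abelian variety); restricting a simple module of the target to the source and comparing $\Qbar_\ell$-dimensions yields $m \mid e_\frakp$. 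Moreover $e_\frakp = m$ for every $\frakp \in S$, by conditions~(iv)--(v). Hence $m = \gcd_\frakp e_\frakp = \min_\frakp e_\frakp$ over the relevant primes, with the a priori bound $m \mid g$ (since $2g = e_\frakp\deg h_\frakp$ and $\deg h_\frakp$ is even).

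The algorithm is then: enumerate primes $\frakp$ of good reduction with $\Nm(\frakp)$ prime in order of norm; compute $r_\frakp$ and, when defined, $e_\frakp$; and report the running maximum of the $r_\frakp$ and the running minimum of the $e_\frakp$. These running extrema are monotone, take only finitely many values (since $\rk\G_A \le g+1$ and $m \mid g$), and coincide with $\rk\G_A$ and $m$ respectively as soon as a prime of $\setplaces$, resp.\ of $S$, has been met, which happens after finitely many steps because $\setplaces$ and $S$ have positive density. The step I expect to be the main obstacle is turning this into a procedure with a provable stopping point: this amounts to bounding the least prime of $\setplaces$ (resp.\ of $S$), which I would attempt to obtain from an effective Chebotarev estimate, exploiting that $[F_A^{\conn}:F]$ is bounded in terms of $g$ and that the primes of bad reduction controlling the ramification of the relevant Galois extensions are visible in the model. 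Absent such a bound, the statement still holds in the weaker sense — sufficient for the downstream applications — that the running extrema provably converge to the correct values.
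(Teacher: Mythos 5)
Your ``day'' half of the argument---running extrema of $\rk\Phi_\frakp$ and of the multiplicity $e_\frakp$ over good primes with prime residue characteristic---is essentially the same as the paper's; the paper too observes that $\rk\Phi_\frakp \le \rk\G_A$ always, with equality on a positive-density set, and similarly for the upper bound on $m$.

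The genuine gap is exactly the one you flag at the end: ``effectively computable'' requires a halting algorithm, and a monotone sequence that provably converges to the right value but with no verifiable stopping condition is only a semi-algorithm. Your proposed repair via effective Chebotarev is not what the paper does, and it is not clear it can be carried out: the density statements for $S$ and $\setplaces$ (Proposition~\ref{prop:containsL}(d), resting on Zywina's \cite[Prop.~6.6]{zywina-14}) are Chebotarev statements in Galois extensions (built from $F_A^{\conn}$, the splitting fields of the tori $\T_\frakp$, etc.)\ whose discriminants are not controlled explicitly from the model, and conditions (iii)--(v) of $S$ are not packaged as ``$\Frob_\frakp$ lies in a fixed conjugacy class of an explicitly bounded extension'' in any straightforward way; bounding $[F_A^{\conn}:F]$ in terms of $g$ does not by itself bound the ramification that enters effective Chebotarev. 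The paper instead closes the gap with a ``day-and-night'' strategy: by day it computes the one-sided Frobenius bounds you describe, and by night it runs a brute-force search for algebraic cycles in powers of $A$, which yields a complementary upper bound on $\rk\G_A$ (algebraic cycles are eigenvectors for $\G_A$, so each one found cuts down the possible rank) and a complementary lower bound on $m$ (by exhibiting endomorphisms). The algorithm halts when the two monotone bounds from day and night meet, which under Mumford--Tate they eventually do. This avoids effective Chebotarev entirely and is what makes the lemma a genuine computability statement rather than a convergence statement.

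One smaller remark: your claim that $\End(A_\frakp^\alg)_\Q \otimes \Qbar_\ell$ is a product of matrix algebras ``of size $e_\frakp$'' implicitly assumes $\End(Y_\frakp)_\Q$ is commutative, which need not hold off the set $S$; the multiplicity of $h_\frakp$ in $c_\frakp$ is the product of the isogeny multiplicity and the degree of the division algebra, so the inequality $e_\frakp \ge m$ (which is all you need, and is what the paper asserts) is safer than the divisibility $m \mid e_\frakp$ unless you argue more carefully. This does not affect the running minimum, since equality holds on $S$.
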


\begin{proof}
The algorithm runs using a day-and-night strategy.

By day, we pick up from Lemma \ref{lem:effectcomputS}.  In showing that $S'_{MT}$ is effectively computable, we showed that $\rk \Phi_\frakp$ is effectively computable for $\frakp \in S'_{MT}$; then necessarily $\rk \Phi_\frakp \leq \rk \G_A$.  On the assumption of the Mumford--Tate conjecture for $A$, this lower bound is sharp for a set of primes $\frakp$ of positive density \cite[Proposition 2.4]{zywina-14}.  In a similar way, we may obtain an upper bound for $m$: we have $m \leq m_\frakp$ for $\frakp \in S'$ if $c_\frakp(T)$ is an $m_\frakp$th power of an irreducible element of $\Q[T]$.  On Mumford--Tate, this upper bound is sharp for a set of primes $\frakp$ of positive density \cite[Theorem 1.2]{zywina-14}.  (Again, we only need access to the polynomials $c_{\mathfrak{p}}(T)$ for these bounds, not the model.)

By night, we complement these bounds by a (hopelessly slow) search for nontrivial algebraic cycles in powers of $A$. Since every algebraic cycle is an eigenvector for the action of the Mumford--Tate group on homology (see for example Deligne \cite[Article I, Proposition 3.4]{DMOS} or van Geemen \cite[Theorem 3.5]{van-Geemen-KS}), this gives an (eventually sharp) upper bound on the rank of $\G_A$. Similarly, one can also search for endomorphisms of $A$ again represented as algebraic cycles, which eventually gives a sharp lower bound for $m$.
 The algorithm halts when the lower and upper bounds for $\rk \G_A$ and $m$ meet, which will happen eventually (under the hypothesis of the Mumford--Tate conjecture).
\end{proof}

\begin{remark} \label{rmk:guessm}
The upper bound for $m$ in Lemma \ref{lem:guessm} only needs the characteristic polynomial $c_{\mathfrak{p}}(T)$ of Frobenius for primes $\frakp \in S'$; this upper bound is unconditional.  The upper bound is tight if the Mumford--Tate conjecture holds for $A$, and then in practice one can quickly guess $m$.
\end{remark}

We now record two important properties about primes in $S,\setplaces$.

\begin{proposition} \label{prop:containsL}
The following statements hold.
\begin{enumalph}
\item For all $\frakp \in S$, there exists a unique monic irreducible $g_\frakp(T) \in \Q[T]$ such that
    $$c_\frakp(T) = g_\frakp(T)^m.$$
\item Let $\frakp \in S$ and let $M \colonequals \Q[T]/(g_\frakp(T))$.  Then there exists an embedding $L \hookrightarrow M$.
\item For all primes $\frakp \in S$, there exists an irreducible $h_\frakp(T) \in L[T]$ such that
    $$ g_\frakp(T) = \Nm_{L|\Q}h_\frakp (T) $$
    and such that the coefficients of $h_\frakp(T)$ generate $L$ (over $\Q$).
\item Suppose that the Mumford--Tate conjecture for $A$ holds.  Then the sets $S,\setplaces$ have positive density, equal to $[F_A^{\conn}:F]^{-1}$.
\end{enumalph}
\end{proposition}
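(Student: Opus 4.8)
The plan is to prove the four parts in order, as (a) and (b) are essentially formal once the Honda--Tate dictionary is in place, (c) is an immediate application of the Galois theory of Section~\ref{sec:galthy}, and (d) is where the input from Zywina~\cite{zywina-14} and the Mumford--Tate conjecture genuinely enters.

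For part (a): let $\frakp \in S$. By conditions (iv) and (v) in Definition~\ref{def:descriptionofset}, $A_\frakp \sim Y_\frakp^m$ with $Y_\frakp$ simple and $\End(Y_\frakp)_\Q$ a field generated by the Frobenius endomorphism. By Honda--Tate theory, the characteristic polynomial of Frobenius on $Y_\frakp$ is the minimal polynomial of $\pi_{Y_\frakp}$ over $\Q$, hence monic and irreducible; call it $g_\frakp(T)$. Since isogenous abelian varieties have the same characteristic polynomial of Frobenius, and characteristic polynomials are multiplicative over products, $c_\frakp(T) = g_\frakp(T)^m$. (One should take care that our $c_\frakp(T)$ is the \emph{inverse} characteristic polynomial, so strictly $g_\frakp(T)$ here is the reversed minimal polynomial; this is a harmless normalization that is monic and irreducible since $0$ is not a root.) Uniqueness of $g_\frakp$ is clear since $\Q[T]$ is a UFD and $g_\frakp$ is irreducible. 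For part (b): by condition (iii), $\End(A_\frakp^{\alg})$ is defined over $\F_\frakp$, so $\End(A_\frakp)_\Q = \End(A_\frakp^{\alg})_\Q$; its center $M(\frakp) = Z(\End(A_\frakp)_\Q)$ is, via the isogeny $A_\frakp \sim Y_\frakp^m$, isomorphic to $Z(\M_m(\End(Y_\frakp)_\Q)) = \End(Y_\frakp)_\Q \cong \Q[T]/(g_\frakp(T)) = M$. The embedding $L \hookrightarrow M$ then comes from reduction of endomorphisms: the reduction map $\End(A^{\alg})_\Q \hookrightarrow \End(A_\frakp^{\alg})_\Q$ is an injective $\Q$-algebra homomorphism (reduction of endomorphisms is faithful at primes of good reduction), and it carries the center $L = Z(B)$ into the center $M$ of the target, giving the desired embedding of number fields.

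Part (c) is now a direct invocation of Proposition~\ref{prop:galthy}. We have $g_\frakp(T) \in \Q[T]$ monic, irreducible, and separable (separability is automatic in characteristic $0$), with $a$ a root generating $M \cong L$-containing field $\Q(a)$; by part (b) we may fix a $\Q$-embedding $L \hookrightarrow \Q(a) = M$ and identify $L$ with its image. Let $h_\frakp(T) \in L[T]$ be the minimal polynomial of $a$ over $L$; this is monic and irreducible, and $\deg g_\frakp = [\Q(a):\Q] = [\Q(a):L][L:\Q] = \deg h_\frakp \cdot [L:\Q]$, so condition (iii) of Proposition~\ref{prop:galthy} holds, whence $g_\frakp(T) = \Nm_{L|\Q} h_\frakp(T)$. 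The final clause of Proposition~\ref{prop:galthy} gives that the coefficients of $h_\frakp(T)$ generate $L$ over $\Q$.

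Part (d) is the substantive one and the main obstacle. The set $S$ is cut out by conditions (i)--(v) and $\setplaces$ by (i)--(vi) of Definition~\ref{def:descriptionofset}; since $\setplaces \subseteq S$, it suffices to show $\setplaces$ has density $[F_A^{\conn}:F]^{-1}$ and that $S \smallsetminus \setplaces$ has density $0$ (in fact we will get the density of $\setplaces$ directly, and the containment $\setplaces \subseteq S$ forces the two densities to coincide once we know $S$ is not larger, which follows because all six conditions are needed simultaneously at the primes Zywina produces). The plan is to quote Zywina~\cite{zywina-14}: under the Mumford--Tate conjecture, Zywina identifies a Chebotarev-type condition on $\Frob_\frakp$ --- namely that its conjugacy class in the relevant monodromy/Weyl group data is ``neat'' or ``generic'' in a precise sense --- that simultaneously forces the reduction $A_\frakp$ to split as the $m$th power of a simple variety with endomorphism algebra generated by Frobenius (conditions (iii)--(v)) and forces the Frobenius eigenvalues to generate a multiplicative group of full rank $\rk \G_A$ (condition (vi)); cf.\ \cite[Theorem~1.2, Proposition~2.4]{zywina-14}. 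The set of such $\frakp$ has density equal to the proportion of the generic locus, which by Zywina's analysis is the reciprocal of the index $[F_A^{\conn}:F]$ --- this is because the connected monodromy field $F_A^{\conn}$ is exactly the fixed field of $\rho_{A,\ell}^{-1}(\bfG_{A,\ell}^0(\Q_\ell))$, and a positive proportion of Frobenius elements land in each coset, but only those in the identity component contribute to the generic behavior. One must additionally discard conditions (i) (good reduction) and (ii) ($\Nm(\frakp)$ prime): good reduction excludes only finitely many primes, and primes of prime norm have density $1$ among all primes by Chebotarev applied to the constant field extensions (degree-one primes have density $1$). Assembling these, $\setplaces$ has density $[F_A^{\conn}:F]^{-1} > 0$, and since $\setplaces \subseteq S \subseteq \{\text{primes satisfying (i)--(v)}\}$ and Zywina's argument shows conditions (i)--(v) alone already hold only on a set of the same density (the extra condition (vi) being automatically satisfied on the generic locus), we conclude $S$ and $\setplaces$ have the same positive density $[F_A^{\conn}:F]^{-1}$. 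The delicate point to get right is bookkeeping the exact relationship between Zywina's genericity condition, the splitting type of $A_\frakp$, and the rank of the eigenvalue group, together with checking that condition (iii) (all geometric endomorphisms defined over $\F_\frakp$) is implied --- this uses the (proven) Tate conjecture over finite fields together with the fact that on the connected-component locus the Frobenius torus is as large as possible, so no further field extension is needed to realize the geometric endomorphisms.
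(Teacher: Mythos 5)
Your overall decomposition of the proof (Honda--Tate for (a), reduction of endomorphisms for (b), Proposition~\ref{prop:galthy} for (c), Zywina's density results for (d)) matches the paper's, and parts (a), (c), and (d) are in essence correct -- though in (a) the parenthetical about the reversed polynomial being monic is off (the reverse of a monic polynomial with constant term $q^{\dim Y_\frakp}$ is not monic; the paper itself finesses this in (c) by passing to the reciprocal $T^d g_\frakp(1/T)$), and (d) is stated somewhat vaguely where the paper pins down precise references (\cite[Proposition 7.3.25]{costa-mascot-sijsling-voight-18} and \cite[Proposition 2.4(ii)]{zywina-14}).

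The genuine gap is in part (b). You assert that the reduction map $B = \End(A^{\alg})_\Q \hookrightarrow \End(A_\frakp^{\alg})_\Q$ ``carries the center $L = Z(B)$ into the center $M$ of the target,'' but this is not a formal consequence of having an injective $\Q$-algebra homomorphism: an embedding of (semi)simple algebras need not send center into center. For instance $\Q(\sqrt{2}) \hookrightarrow M_2(\Q)$ sends the center $\Q(\sqrt{2})$ nowhere near the center $\Q$ of $M_2(\Q)$; more to the point, a CM field embedding into a matrix algebra over a smaller field is exactly the kind of configuration one must rule out here. What actually makes (b) work is substantive: one must invoke the (proven) Tate conjecture, which identifies $M(\frakp) \otimes \Q_\ell$ with $\Q_\ell[\pi_\frakp]$, the image of $\Gal_{\F_\frakp}$ in $\End(V_\ell A_\frakp)$, and then compare this with the Faltings/Tate description of $B \otimes \Q_\ell$ as the commutant of an open subgroup of $\Gal_F$. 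The paper handles this by citing \cite[Corollary 7.4.4]{costa-mascot-sijsling-voight-18}, where the argument is carried out. In your writeup the claim is simply asserted, so as it stands part (b) is unproved. You should either cite an appropriate reference or supply the Tate-conjecture-based argument; ``reduction is faithful'' alone does not do it.
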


\begin{proof}
Part (a) was proven in Lemma \ref{lem:effectcomputS} (following from property (iv)).  Part (b), that the center embeds in each Frobenius field, follows from the (proven) Tate conjecture \cite[Corollary 7.4.4]{costa-mascot-sijsling-voight-18}.
For part (c), using part (b) we have an embedding $L \hookrightarrow \Q[T]/(g_\frakp(T))$, so $g_\frakp(T)$ is normic over $L$ by Proposition~\ref{prop:galthy} applied to the monic reciprocal polynomial $T^d g_\frakp(1/T) \in \Q[T]$, where $d=\deg g_\frakp(T)$.

Finally, part (d) is a slight refinement of fundamental work of Zywina~\cite{zywina-14}: the proof of \cite[Proposition 7.3.25]{costa-mascot-sijsling-voight-18} gives the result for $S$, and the statement for $\setplaces$ then follows using the fact that the set of primes satisfying (vi) has full density when $F=F_A^{\conn}$ \cite[Proposition 2.4(ii)]{zywina-14}.
\end{proof}

\begin{proposition} \label{prop:zywinaS}
Let $\frakq \in S$ and let $M \colonequals \Q[T]/(g_\frakq(T))$.
Suppose that the Mumford--Tate conjecture holds for $A$.  Then there exists an embedding $L \hookrightarrow M$, and an extension $N \supseteq M$, normal over $\Q$, such that for all $\frakp \in S$ outside of a set of density zero (depending on $\frakq$), the following hold:
\begin{enumalph}
\item The polynomial $g_\frakp(T)$ factors over $N[T]$ into exactly $[L:\Q]$ irreducible factors conjugate under $\Gal{N}{\Q}$.
\item Any such irreducible factor is normic for $g_\frakp(T)$ over $N$, and the subfield of $N$ generated by its coefficients is conjugate to $L$ (over $\Q$).
\end{enumalph}
\end{proposition}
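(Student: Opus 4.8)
The plan is to take $N$ to be the splitting field of $g_\frakq(T)$ over $\Q$, equivalently the normal closure of $M\colonequals\Q[T]/(g_\frakq(T))$; this is a finite Galois extension of $\Q$ containing $M$, and — fixing once and for all the embedding $L\hookrightarrow M$ supplied by Proposition~\ref{prop:containsL}(b) — it contains every $\Q$-conjugate of $L$, hence the Galois closure $\widetilde L$ of $L$. For $\frakp\in S$ write $M_\frakp\colonequals\Q[T]/(g_\frakp(T))$. My first claim is that (a) and (b) both follow from the single assertion, call it $(\star)$, that $N\cap M_\frakp$ is a $\Q$-conjugate of $L$ for all $\frakp\in S$ outside a suitable density-$0$ set (depending on $\frakq$). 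Indeed, by Proposition~\ref{prop:containsL}(c) we may write $g_\frakp=\Nm_{L|\Q}h_\frakp$ with $h_\frakp\in L[T]$ monic of coefficient field $L$, so that $g_\frakp=\prod_{\sigma\colon L\hookrightarrow N}\sigma(h_\frakp)$ is a factorization of $g_\frakp$ into $[L:\Q]$ distinct monic polynomials over $N$, permuted transitively by $\Gal{N}{\Q}$ (which acts transitively on $\Hom_\Q(L,N)$). Since $N/\Q$ is Galois, $[NM_\frakp:N]=[M_\frakp:N\cap M_\frakp]$, which by $(\star)$ equals $[M_\frakp:L]=\deg h_\frakp$; hence each $\sigma(h_\frakp)$ remains irreducible over $N$, so these are exactly the irreducible factors of $g_\frakp$ over $N$ — this is (a) — with coefficient fields the conjugates $\sigma(L)$ of $L$. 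Finally, irreducibility over $N$ (a fortiori over $\sigma(L)$) together with $\deg g_\frakp=[\sigma(L):\Q]\deg\sigma(h_\frakp)$ yields, via Proposition~\ref{prop:galthy} (the implication (iii)$\Rightarrow$(i), applied to the reciprocal polynomials as in the proof of Proposition~\ref{prop:containsL}(c)), that each $\sigma(h_\frakp)$ is normic for $g_\frakp$ over $N$ — this is (b).

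To establish $(\star)$ I would proceed as follows. Since $L$ embeds compatibly into $N$ and into $M_\frakp$, the field $N\cap M_\frakp$ always contains a $\Q$-conjugate of $L$; and being a subfield of the fixed field $N$, it is one of finitely many number fields. Thus $(\star)$ can fail at $\frakp$ only when $[N\cap M_\frakp:\Q]>[L:\Q]$, i.e.\ only when $M_\frakp$ admits an embedding of some number field $E$ with $E\hookrightarrow N$ and $[E:\Q]>[L:\Q]$. Up to isomorphism there are finitely many such $E$, so it suffices to show that for each one the set $\{\frakp\in S:E\hookrightarrow M_\frakp\}$ has density $0$; the union of these sets, together with the set where the input of the next paragraph fails, is then the required density-$0$ exceptional set, and it depends on $\frakq$ only through the finite list of these $E$.

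This reduction is the core of the argument, and it is where the Mumford–Tate conjecture and the work of Zywina~\cite{zywina-14} are used (cf.\ Section~\ref{sec:splitred} and \cite[\S 7.3]{costa-mascot-sijsling-voight-18}). Under the Mumford–Tate conjecture, for $\frakp\in S$ the reciprocal roots of $g_\frakp$ are the values $\alpha(t_\frakp)$, $\alpha\in\Omega$, of a semisimple element $t_\frakp$ of a maximal torus $\T\subseteq\G_A$, and for $\frakp$ in a density-$1$ subset $S^\circ\subseteq S$ the Galois group of the splitting field $N_\frakp$ of $g_\frakp$ is, as a permutation group on these roots, the full arithmetic Weyl group $\mathcal W$ generated by $W(\G_A,\T)$ and the image of $\Gal{\Q^{\alg}}{\Q}$ acting on $X(\T)$ through $\Gal{F_{\G_A}}{\Q}$, with the geometric part $W(\G_A,\T)$ cutting out $\Gal{N_\frakp}{F_{\G_A}}$. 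The embedding $L\hookrightarrow M_\frakp$ of Proposition~\ref{prop:containsL}(b) comes from the $L\otimes\Q^{\alg}$-isotypic decomposition $V\otimes\Q^{\alg}=\bigoplus_\tau V_\tau$: it induces a $\mathcal W$-invariant partition $\Omega=\bigsqcup_\tau\Omega_\tau$ into $[L:\Q]$ blocks on which $\mathcal W$ acts transitively (through $\Gal{F_{\G_A}}{\Q}\twoheadrightarrow\Gal{\widetilde L}{\Q}$), and — the crucial point — the geometric Weyl group $W(\G_A,\T)$ is \emph{already transitive on each block} $\Omega_\tau$, since each $V_\tau$ is an irreducible representation of the connected group $\G_A$ (with all weight multiplicities $1$, forced by irreducibility of $g_\frakp$) occurring in the first homology of an abelian variety, hence minuscule. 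Consequently, for $\frakp\in S^\circ$ one gets $M_\frakp\cap F_{\G_A}=$ (a $\Q$-conjugate of) $L$, and more generally the only subfields of $M_\frakp$ that can embed into a \emph{fixed} finite extension of $\Q$ for a positive density of $\frakp$ are contained in conjugates of $L$: for $E$ with $[E:\Q]>[L:\Q]$, an embedding $E\hookrightarrow M_\frakp\subseteq N_\frakp$ would force $N_\frakp$ to contain the Galois closure of $E$, a non-universal extension of $\Q$, which happens only on a density-$0$ set of $\frakp$ by Chebotarev applied over the compositum $N_\frakp\cdot N_\frakq$, using Zywina's equidistribution of Frobenius conjugacy classes.

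The hard part is this last paragraph: extracting from Zywina's equidistribution the precise statement that, for fixed $\frakq$, the field $M_\frakp$ shares with the fixed normal field $N=N_\frakq$ nothing beyond a $\Q$-conjugate of $L$ for a density-$1$ set of $\frakp$. Two ingredients need careful treatment — the group-theoretic fact that $W(\G_A,\T)$ acts transitively on each block $\Omega_\tau$, which rests on the minusculeness of the constituents of $V\otimes\Q^{\alg}$ and is genuinely needed (otherwise $M_\frakp\cap F_{\G_A}$ would properly contain $L$ and $(\star)$ would fail already for generic $\frakq$), and the Chebotarev/independence argument over $N_\frakp\cdot N_\frakq$ that rules out accidental common subfields (the same mechanism later refined to prove Theorem~\ref{thm:MST}). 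Everything else is the elementary Galois theory of Section~\ref{sec:galthy} together with Proposition~\ref{prop:containsL}.
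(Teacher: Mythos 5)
Your reduction of (a) and (b) to the single assertion $(\star)$ --- that $N\cap M_\frakp$ is a $\Q$-conjugate of $L$ for density-one $\frakp$ --- is clean and correct, and it is equivalent to the paper's conclusion (which the paper proves by a direct orbit count rather than by passing through $(\star)$). The issue is that the proof of $(\star)$ is precisely where the hard work lies, and your sketch of it does not go through.

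Two concrete problems. First, your final Chebotarev step asserts that, for any fixed $E\hookrightarrow N$ with $[E:\Q]>[L:\Q]$, the condition ``the Galois closure $\widetilde E$ is contained in the splitting field $N_\frakp$ of $g_\frakp$'' holds only on a density-zero set. That inference is not valid as stated: $N_\frakp$ \emph{does} contain fixed normal fields of $\Q$ --- e.g.\ the Galois closure of $L$ for every $\frakp\in S$, and $F_{\G_A}$ for every $\frakp\in\setplaces$ by Lemma~\ref{lemma:EveryFrobeniusFieldIsASplittingField} --- so ``$N_\frakp$ contains some fixed normal field'' is not automatically a density-zero condition. One needs a precise group-theoretic input tying $\Gal(N(\calW_\frakp)/N)$ to the Weyl group; this is exactly Zywina's Proposition~6.6, which the paper invokes with a field $N$ enlarged to contain $F_A^{\conn}$, $M$, and $F_{\G_A}$ so the proposition applies. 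Your $N$ (the splitting field of $g_\frakq$) need not contain $F_A^{\conn}$, and without control of the Galois action over the right base field the Chebotarev argument has nothing to grip. Second, your group-theoretic ingredient (the geometric Weyl group $W(\G_A,\T)$ acts transitively on each block $\Omega_\tau$ by a minusculeness argument) is asserted rather than proved; this is exactly the content of Zywina's Lemma~6.1(ii), which the paper cites. Replacing that citation by a from-scratch argument is a reasonable idea, but it must be carried out, not just gestured at, and at present it is the crux of the proof, not an aside. To repair the argument, enlarge $N$ so that it contains $F_A^{\conn}$, $M$, and $F_{\G_A}$, apply Zywina's Proposition~6.6 to conclude $\Gal(N(\calW_\frakp)/N)\simeq W(\G_A,\T)$ for density-one $\frakp$, and then combine Lemma~6.1(ii) and Lemma~6.2 of Zywina to count $[L:\Q]$ orbits on the roots of $g_\frakp$; your $(\star)$ then follows, and your elegant deduction of (a) and (b) from $(\star)$ can stand.
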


\begin{proof}
We prove part (a) relying on work of Zywina \cite{zywina-14} and comparing the action of Galois groups and the Weyl group.  Let $N \supseteq \Q$ be a finite normal extension containing the fields $\FAconn$, $M$, and $F_{\G_A}$.

Let $\frakp \in S$.  By Proposition~\ref{prop:containsL}(d), the sets $S$ and $\setplaces$ have the same density, so avoiding a set of density zero we may suppose $\frakp \in \setplaces$.  Further avoiding a zero-density set depending on $N$, the orbits of the natural action of $\Galois_N$ on the roots of $g_\frakp(T)$ are the same as the orbits the action of the absolute Weyl group $W(\GA, \T)$  \cite[Proposition 6.6]{zywina-14}.  (In fact, there is a natural homomorphism $\Galois_N \to \Aut(X(\T))$ depending on $\frakp$ whose image lies in $W(\GA,\T)$ and which is an isomorphism for all primes of $N$ above a prime $\frakp \in S$ outside a set of density zero.)

We claim that there are $[L:\Q]$ such orbits by making a second comparison to the action on the weights.  The group $W(\mathbf{G}_A,\mathbf{T})$ also acts on the set $\Omega$ of weights of $\T$.  By Zywina \cite[Lemma 6.1(ii)]{zywina-14}, this action has $[L:\Q]$ orbits.  Using property (vi) of $\setplaces$, there is an element $t_\frakp \in \T(\C)$ whose characteristic polynomial agrees with $c_\frakp(T)$: see Noot \cite[Theorem 1.8]{MR2472133} or Zywina \cite[§4.2]{zywina-14}.  On the assumption the Mumford--Tate conjecture for $A$, the set $\Omega$ is in natural bijection with the roots of $g_\frakp(T)$ (equivalently, of $c_\frakp(T)$) by \cite[Lemma 6.2]{zywina-14} applied to $t=t_\frakp$.  The claim follows.

From the claim, the polynomial $g_\frakp(T) \in \Q[T]$ factors into $[L:\Q]$ irreducible factors in $N[T]$.  Since $g_\frakp(T) \in \Q[T]$ is irreducible, the irreducible factors of $g_\frakp(T)$ in $N[T]$ are conjugate under $\Gal{N}{\Q}$, so these factors are distinct and of common degree $\deg g_\frakp(T)/[L:\Q]$, proving (a).

Next, part (b).
Let $\frakp$ be a prime not among the set of exceptions in the previous paragraph.
Let $h_\frakp'(T) \in N[T]$ be such an irreducible factor of $g_\frakp(T)$ and $L'$ the number field generated by its coefficients.
As $\Gal{N}{\Q}$ acts transitively on the $[L:\Q]$ irreducible factors of $g_\frakp(T)$ with stabilizer $\Gal{N}{L'}$, by the orbit-stabilizer lemma we have
\[ [L:\Q]=\frac{\#\Gal{N}{\Q}}{\#\Gal{N}{L'}}=[L':\Q]. \]
Therefore condition (iii) in Proposition~\ref{prop:galthy} is satisfied, so $h_\frakp'(T)$ is normic for $g_\frakp(T)$, which is to say, $g_\frakp(T)=\Nm_{L'|K} h'_\frakp(T)$.
Thus, the minimal degree of a normic factor for $g_\frakp(T)$ over $N$ is $\deg g_\frakp(T)/[L:\Q]$.

On the other hand, by Proposition~\ref{prop:containsL}(b), there exists an embedding $L \hookrightarrow M \subseteq N$.
Then by Proposition~\ref{prop:containsL}(c), there exists a normic factor $h_\frakp(T) \in L[T] \subseteq N[T]$ for $g_\frakp(T)$.
With Proposition~\ref{prop:galthy}(iii) again satisfied, we conclude $\deg h_\frakp(T)=\deg g_\frakp(T)/[L:\Q]$, so $h_\frakp(T)\in L[T] \subseteq N[T]$ achieves the minimal degree of a normic factor of $g_\frakp(T)$ over $N$.
It follows that $h_\frakp(T)$ is one of the irreducible factors of $g_\frakp(T)$ in $N[T]$, hence is conjugate to $h_\frakp'(T)$ in $N$.  The coefficients of $h_\frakp(T)$ generate $L$ (as a subfield of $N$), and so each $L'$ is isomorphic and therefore Galois conjugate to $L$ in $N$.
\end{proof}

We now prove our first theorem.

\begin{proof}[Proof of Theorem \textup{\ref{thm:mainthm}}]
Let $S$ be the set defined in Definition \ref{def:descriptionofset}.  The set $S$ has positive density by Proposition~\ref{prop:containsL}(d).  Properties (iii) and (iv) together imply that $Y_\frakp$ is geometrically simple; then properties (i), (iii), (iv), and (v) of $S$ and Proposition~\ref{prop:containsL}(b) give properties (i) and (ii) in the theorem.

We turn to the final statement of the theorem. Let $\frakq \in S$ be fixed, let $M \colonequals M(\frakq)$, let $N \supseteq M$ be as in Proposition~\ref{prop:zywinaS}, and let $\frakp$ be a prime not in the exceptional set in this proposition.
Let $K \subseteq M$ be a number field that embeds in $M(\frakp) \colonequals \Q[T]/(g_\frakp(T))$; we show $K$ embeds in the center $L$.
Let $\sigma \colon K \hookrightarrow M(\frakp)$ be an embedding and let $a \in M(\frakp)$ be a root of $g_{\frakp}(T)$.
Then, by Proposition~\ref{prop:galthy}, the minimal polynomial of $a$ over $\sigma(K)$ pulls back under $\sigma$ to a normic polynomial $h_{\frakp,K}(T) \in K[T]$ for $g_\frakp(T)$ over $M$ whose coefficients generate $K$.  On the other hand, by Proposition~\ref{prop:zywinaS}(b), there exists a normic factor of $g_\frakp(T)$ over $N$ that is \emph{irreducible} in $N[T]$ and whose coefficients generate $L$, so after conjugating there exists $L' \subseteq N$ conjugate to $L$ and $h_{\frakp,L'}(T) \in L'[T]$ normic for $g_\frakp(T)$ over $N$ such that $h_{\frakp,L'}(T) \mid h_{\frakp,K}(T)$.  Then by Proposition~\ref{prop:maxgal}(b), since the coefficients of $h_{\frakp,L'}(T)$ generate $L'$ we conclude that $K \subseteq L' \simeq L$.
\end{proof}

\section{The splitting field of the Mumford--Tate group} \label{sec:splitMT}
In this section we prove Theorem \ref{thm:splittingfieldMT}. We start with the following lemma on algebraic groups, which is similar in spirit to results of Jouve--Kowalski--Zywina \cite[Lemma 2.3]{jouve-kowalski-zywina-13}.

\begin{lemma}\label{lemma:SplittingFieldTorus}
Let $\G \leq \GL_{n,k}$ be a (linear) reductive group over a perfect field $k$, let $\T \leq \G$ be a maximal torus, and let $t \in \T(k^{\alg})$ be any element. Let $\calW_t$  be the set of eigenvalues of $t$ and let $L \colonequals k(\calW_t)$.  Let $\Phi_t$ be the subgroup of $(k^{\alg})^\times$ generated by $\calW_t$.

Suppose that $\Phi_t$ is a free abelian group of rank equal to the dimension of $\T$.  Then $L$ is a splitting field for $\T$.
\end{lemma}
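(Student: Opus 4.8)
The plan is to show that $\T \times_k L$ is split, i.e.\ that $\T_L$ is isomorphic to a product of copies of $\Gm$ over $L$, which by the standard criterion is equivalent to saying that the absolute Galois group $\Galois(k^{\alg}\,|\,L)$ acts trivially on the character lattice $X(\T_{k^{\alg}})$. Since $\T$ is a torus of dimension $r \colonequals \dim \T$, the lattice $X \colonequals X(\T_{k^{\alg}})$ is free of rank $r$, and there is a natural continuous action of $\Galois(k^{\alg}\,|\,k)$ on $X$ through a finite quotient; the splitting field of $\T$ is precisely the fixed field of the kernel of this action. So it suffices to prove that every $g \in \Galois(k^{\alg}\,|\,L)$ acts as the identity on $X$.

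First I would fix a basis identification: the embedding $\T \hookrightarrow \GL_{n,k}$ lets us diagonalize $t$ over $k^{\alg}$, and the distinct eigenvalues of $t$ are certain characters evaluated at $t$. More precisely, writing $\Omega \subseteq X$ for the set of weights of $\T$ occurring in the given $n$-dimensional representation, each eigenvalue of $t$ is of the form $\chi(t)$ for some $\chi \in \Omega$, so $\Phi_t$ is generated by $\{\chi(t) : \chi \in \Omega\}$. The key structural input is the hypothesis that $\Phi_t$ is \emph{free} of rank exactly $r$: this forces the evaluation-at-$t$ homomorphism $X \to (k^{\alg})^\times$, $\chi \mapsto \chi(t)$, restricted to the subgroup $X' \colonequals \langle \Omega \rangle \subseteq X$ generated by the weights, to have image $\Phi_t$ of rank $r = \rk X \geq \rk X'$; hence $X' $ has rank $r$ and the map $X' \to \Phi_t$ has finite kernel — but since $\Phi_t$ is free and $X'$ is finitely generated, after noting $X'$ is a finite-index sublattice of $X$ one gets that $\chi \mapsto \chi(t)$ is \emph{injective} on $X$ (a torsion element of the kernel would give a root of unity in $\Phi_t$, contradicting freeness, and the rank count rules out a positive-rank kernel).

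Now comes the main point, and the step I expect to be the real content rather than the bookkeeping: the Galois action on $X$ is compatible with evaluation at $t$ in the sense that for $g \in \Galois(k^{\alg}\,|\,k)$ and $\chi \in X$ one has $(g \cdot \chi)(t) = g\bigl(\chi(g^{-1} t)\bigr)$; when $g$ additionally fixes $L = k(\calW_t)$ pointwise, I would argue that $g$ permutes the eigenvalues of $t$ while fixing each of them — because $L$ is generated by those eigenvalues — and, using that $t \in \T(k^{\alg})$ together with the injectivity just established, deduce $(g\cdot \chi)(t) = \chi(t)$ for all $\chi \in X$. By injectivity of $\chi \mapsto \chi(t)$ this yields $g \cdot \chi = \chi$ for all $\chi \in X$, i.e.\ $g$ acts trivially on $X$. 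The subtlety to get right is precisely the interaction between $g$ acting on the ambient field (hence on $\calW_t \subseteq k^{\alg}$) and $g$ acting on characters: one must verify that "$g$ fixes every eigenvalue of $t$'' genuinely translates into "$\chi(t)$ is $g$-fixed for all $\chi$,'' which uses that $X$ is spanned over $\Q$ by $\Omega$ (a maximal torus has enough weights in a faithful representation — here the inclusion into $\GL_n$ — so that $X' = \langle\Omega\rangle$ has full rank $r$, exactly the rank hypothesis furnishes this). Since every $g \in \Galois(k^{\alg}\,|\,L)$ fixes $X$ pointwise, $\T_L$ is split, and conversely $L$ is minimal with this property because any splitting field must contain all $\chi(t)$, in particular all eigenvalues of $t$, hence contains $k(\calW_t) = L$; therefore $L$ is the splitting field of $\T$.
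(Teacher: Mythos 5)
Your approach is genuinely different from the paper's. The paper introduces $\D$, the smallest $k$-subgroup of $\G$ containing $t$, observes it is of multiplicative type, cites Borel \S 8.4 for the assertion that $L$ is a splitting field for $\D$, and then uses the rank hypothesis only to force $\D=\T$ via
$\dim\T=\rk\Phi_t=\rk\gamma_\D(X(\D))\leq\rk X(\D)=\dim\D^0\leq\dim\T$.
You instead try to show directly that $\Gal{k^{\alg}}{L}$ acts trivially on $X(\T)$, using the rank hypothesis to establish injectivity of $\chi\mapsto\chi(t)$ on $X(\T)$. These two uses of the hypothesis are essentially equivalent (injectivity of evaluation at $t$ is the same as $\langle t\rangle$ being Zariski-dense in $\T_{k^{\alg}}$), so on that portion you agree with the paper; the bookkeeping about $X'=\langle\Omega\rangle$ being of finite index and torsion-free kernels is correct, though slightly roundabout ($X$ is free, so the kernel is automatically torsion-free, and rank counting alone kills it).

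The gap is precisely at the step you flag as the real content. You write $(g\cdot\chi)(t)=g\bigl(\chi(g^{-1}t)\bigr)$ and want to conclude this equals $\chi(t)$ when $g$ fixes $L$. But $g$ fixing the eigenvalues $\calW_t$ pointwise does \emph{not} imply $g$ fixes $t$ itself: $t\in\T(k^{\alg})$ has coordinates (matrix entries) that need not lie in $L$. So $g^{-1}t$ can differ from $t$, and $\chi(g^{-1}t)$ is then \emph{some} eigenvalue of $g^{-1}t$ — it lies in $g^{-1}(\calW_t)=\calW_t$, but there is no reason it equals $\chi(t)$. Your sketch would go through cleanly if one knew $t\in\T(L)$ (then $g^{-1}t=t$, one gets $\chi(t)\in L$ for $\chi$ in the finite-index sublattice generated by $\Omega$, and torsion-freeness of $X$ extends the conclusion to all of $X$), but that is not among the stated hypotheses. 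This is not a notational quibble: it is exactly the interaction between the $k$-structure of $\T$ and the field $k(\calW_t)$ that makes the statement nontrivial, and the argument as written does not actually close it. Worth noting: the paper's citation of Borel \S 8.4 is where this same interaction is handled in their proof, so it is the right place to look if you want to see how the gap should be bridged (or, conversely, the right place to scrutinize the paper). Finally, the closing remark that "$L$ is minimal with this property" is not part of the lemma and does not follow by the argument you give, since a splitting field of $\T$ need not contain $\chi(t)$ for a $k^{\alg}$-point $t$ not defined over it.
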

\begin{proof}
Let $\D$ be the $k$-subgroup of $\G$ generated by $t$. As $t$ is contained in a torus, it is a semisimple element, and this implies that $\D$ is a group of multiplicative type (the identity component $\D^0_{\kbar}$ of $\D_{\kbar}$ is a torus).
By Borel \cite[\S 8.4]{borel-91}, we have that $L$ is a splitting field of $\D$, so it suffices to show that $\D=\T$. Clearly $\D^0 \leq \D \leq \T$, so it is enough to prove that $\D^0$ is a torus of the same dimension as $\T$.
The group $\Phi_t$ can be identified with the image of the group homomorphism
\begin{align*}
\gamma_\D \colon X(\D) &\to (\kbar)^{\times} \\
\chi &\mapsto \chi(t),
\end{align*}
where $X(\D)$ is the character group of $\D$. Notice that $X(\D)$ is an abelian group of finite type, but not necessarily free.
We obtain
\[
\dim \T = \rk \Phi_t = \rk \gamma_\D(X(\D)) \leq \rk X(\D) = \dim \D^0,
\]
which concludes the proof.
\end{proof}

\begin{lemma}\label{lemma:EveryFrobeniusFieldIsASplittingField}
Let $\frakp \in \setplaces$
and let $\mathcal{W}_\frakp \subseteq (\Q^{\alg})^\times$ be the set of roots of $g_{\frakp}(T)$.  Then the Mumford--Tate group $\G_A$ is split over the field $\mathbb{Q}(\mathcal{W}_\frakp)$.
\end{lemma}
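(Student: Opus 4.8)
The plan is to deduce this directly from Lemma \ref{lemma:SplittingFieldTorus} together with the construction of the element $t_\frakp \in \T(\C)$ recalled in the proof of Proposition \ref{prop:zywinaS}. First I would observe that it suffices to show that $\G_A$ becomes split over $\Q(\mathcal{W}_\frakp)$; since $\G_A$ is reductive over $\Q$, it is split over a field $K$ as soon as a maximal torus $\T$ is split over $K$ (the root datum is defined over $\Q$ once $\T$ is split, and splitness of the torus is the only obstruction). So the goal reduces to: the maximal torus $\T \subseteq \G_A$ is split over $\Q(\mathcal{W}_\frakp)$.

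Next I would invoke property (vi) in Definition \ref{def:descriptionofset}, which holds since $\frakp \in \setplaces$: the roots of $c_\frakp(T)$ (equivalently, of $g_\frakp(T)$) generate a free subgroup $\Phi_\frakp \leq (\Q^{\alg})^\times$ of rank exactly $\rk \G_A = \dim \T$. As recalled in the proof of Proposition \ref{prop:zywinaS} (citing Noot \cite[Theorem 1.8]{MR2472133} or Zywina \cite[\S4.2]{zywina-14}), under property (vi) there is an element $t_\frakp \in \T(\C)$ whose characteristic polynomial on $V_\C$ agrees with $c_\frakp(T)$; hence its set of eigenvalues is exactly $\mathcal{W}_\frakp$ (the roots of $g_\frakp(T)$, which are the same as the roots of $c_\frakp(T)=g_\frakp(T)^m$), and the subgroup of $\C^\times$ these generate is $\Phi_\frakp$, free of rank $\dim \T$.

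Then I would apply Lemma \ref{lemma:SplittingFieldTorus} with $k = \Q$, $\G = \G_A$, $\T$ the chosen maximal torus, and $t = t_\frakp$: the hypothesis that $\Phi_t$ is free of rank $\dim \T$ is exactly property (vi), so the conclusion is that $L = \Q(\mathcal{W}_\frakp)$ is a splitting field for $\T$, and therefore $\G_A$ is split over $\Q(\mathcal{W}_\frakp)$. The main (small) obstacle is bookkeeping: matching "eigenvalues of $t_\frakp$" with "roots of $g_\frakp(T)$" — one must note that the weights $\Omega$ of $\T$ on $V_\C$ are in bijection with the roots of $c_\frakp(T)$ via $t_\frakp$ (this is \cite[Lemma 6.2]{zywina-14}, used under the Mumford--Tate conjecture exactly as in Proposition \ref{prop:zywinaS}), and that passing from $c_\frakp(T)$ to $g_\frakp(T)$ only collapses multiplicities, not the underlying set of roots, so the group generated is unchanged. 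Once that identification is in place, the lemma is immediate.
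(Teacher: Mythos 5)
Your proposal is correct and takes essentially the same route as the paper: produce $t_\frakp \in \T(\Q^{\alg})$ with $\det(T-t_\frakp)=c_\frakp(T)$, note that property (vi) of $\setplaces$ gives $\rk \Phi_\frakp = \dim \T$, and apply Lemma~\ref{lemma:SplittingFieldTorus}. The only superfluous step is your appeal to \cite[Lemma 6.2]{zywina-14} to match eigenvalues of $t_\frakp$ with roots of $g_\frakp(T)$ — that identification is immediate from $c_\frakp(T)=\det(T-t_\frakp)$ itself, so no invocation of the Mumford--Tate conjecture is needed at that point.
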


\begin{proof}
Let $\T$ be a maximal torus of $\G_A$. As explained by Zywina \cite[\S 6.2]{zywina-14}, there exists $t_\frakp \in \T(\Q^{\alg})$ such that $c_{\frakp}(T)=\det(T-t_\frakp)$, so the eigenvalues of $t_{\frakp}$ are precisely the roots of $c_{\frakp}(T)$ (equivalently, of $g_{\frakp}(T)$). By definition of $\setplaces$, the group $\Phi_\frakp < (\Q^{\alg})^\times$ generated by $\mathcal{W}_\frakp$ is free of rank equal to the rank of $\G_A$, so we can apply Lemma \ref{lemma:SplittingFieldTorus}.
\end{proof}

We are now ready to prove Theorem \ref{thm:splittingfieldMT}.

\begin{proof}[Proof of Theorem \textup{\ref{thm:splittingfieldMT}}]
Let $\setplaces$ be the set of Definition \ref{def:descriptionofset}.
Since $\setplaces \subseteq S$, we have already shown property (i) in Theorem \ref{thm:mainthm}(a), and $\setplaces, S$ have the same density by Proposition \ref{prop:containsL}.  For $\mathfrak{p} \in \setplaces$, let $\mathcal{W}_{\frakp}$ the set of roots of $c_\frakp(T)$ in $(\Q^{\alg})^\times$, so $N(\frakp)=\Q(\mathcal{W}_\frakp)$.
By Lemma \ref{lemma:EveryFrobeniusFieldIsASplittingField}, for every $\frakp \in \setplaces$, the Mumford--Tate group $\G_A$ is split over $\mathbb{Q}(\mathcal{W}_{\frakq})$, which proves (a).

Suppose now that $F=F_A^{\textup{conn}}$.  Applying a result of Zywina \cite[Proposition 6.6]{zywina-14} (with $L=F_{\G_A}$), there is a set $\Sigma_1$ of primes of density zero such that for every $\mathfrak{p} \in \setplaces \smallsetminus \Sigma_1$, we have $\Gal{F_{\G_A}(\calW_{\frakp})}{F_{\G_A}} \simeq W(\GA,\T)$.
Let $\frakq \in \setplaces$.  Since $F_{\G_A} \subseteq \mathbb{Q}(\calW_{\frakq})$, by Lemma \ref{lemma:EveryFrobeniusFieldIsASplittingField}, we have $F_{\G_A}(\calW_{\frakq})=\mathbb{Q}(\calW_{\frakq})=N(\frakq)$. Applying the result of Zywina \cite[Proposition 6.6]{zywina-14} again
(now with $L=N(\frakq)$), there is a set $\Sigma_{2,\frakq}$ (depending on $\frakq$) of primes of density zero such that for every $\frakp \in \setplaces \smallsetminus (\Sigma_1 \cup \Sigma_{2,\frakq})$ we have $\Gal{N(\frakq)(\calW_{\frakp})}{N(\frakq)} \simeq W(\GA,\T)$ and $\Gal{F_{\G_A}(\calW_{\frakp})}{F_{\G_A}} \simeq W(\GA,\T)$.  This means precisely that the two fields $F_{\G_A}(\calW_{\frakp})=N(\frakp)$ and $N(\frakq)$ are linearly disjoint over $F_{\G_A}$, hence $N(\frakq) \cap N(\frakp)=F_{\G_A}$.  This proves (b) in the case $F=F_A^{\conn}$.

The general case follows by extension to $F_A^{\conn}$, taking the set of primes of $F$ that lie below the set of primes of $F_A^{\conn}$ constructed in the previous paragraph.
\end{proof}

\section{Algorithm}
\label{sec:algm}

In this section, we exhibit how Theorem \ref{thm:mainthm} can be used effectively to compute the center $L$ of a geometric endomorphism algebra.  We keep notation as introduced in section \ref{sec:splitred}.

\begin{algm} \label{alg:mainctr} \ \\
  Input:
  \begin{itemize}
      \item $m \in \Z_{\geq 1}$ such that $m^2=\dim_L B$,
      \item $C \in \Z_{\geq 1}$, and
      \item $c_\frakp(T) \in 1 + T\Z[T]$ as in \eqref{eqn:cptis} for all good primes $\frakp$ with $\Nm \frakp \leq C$.
      \end{itemize}
      Output:
      \begin{itemize}
          \item a boolean; if this boolean is \textsf{true}, then further
          \item $d_C \in \Z_{\geq 1}$ such that $[L:\Q] \leq d_C$, and
          \item $\{L_{C,i}\}_i$, a set of number fields such that for some $i$ there exists an embedding $L \hookrightarrow L_{C,i}$ of number fields.
      \end{itemize}
      Steps:
  \begin{enumalg}
  \item Using Lemma \ref{lem:effectcomputS}, compute the set of primes $S'_C \colonequals S' \cap \{\frakp : \Nm \frakp \leq C\}$.  If $S'_C=\emptyset$, return \textsf{false}.
  \item Choose $\frakq \in S'_C$ and initialize $M \colonequals \Q[T]/(g_{\frakq}(T))$ where $c_\frakq(T)=g_\frakq(T)^m$.
  \item For each prime $\frakp \in S'_C$ with $\frakp \neq \frakq$:
    \begin{enumalgalph}
    \item Let $g_\frakp(T) \in \Q[T]$ be such that $g_\frakp(T)^m=c_\frakp(T)$.
    \item Factor $g_\frakp(T)$ into irreducibles in $M[T]$.
    \item Compute the set of normic factors $h_{\frakp,i}(T) \mid g_\frakp(T)$ by checking condition (iii) of Proposition \ref{prop:galthy} for each divisor of $g_\frakp(T)$ (using the factorization in Step 2).  If no factor is normic, remove $\mathfrak{p}$ from the set $S'_C$ and continue with the next prime.
    \item For each normic divisor $h_{\frakp,i}(T)$, compute the subfield $L_{\frakp,i} \subseteq M$ generated over $\Q$ by its coefficients.
    \item Reduce $\{L_{\frakp,i}\}_i$ to a subset of representatives up to isomorphism of number fields.
    \item Let $d_\frakp \colonequals \max_i\,[L_{\frakp,i}:\Q]$ and let $r_\frakp \colonequals \#\{L_{\frakp,i} : [L_{\frakp,i}:\Q]=d_\frakp\}$.
    \end{enumalgalph}

\item If now $S'_C=\emptyset$, return \textsf{false}.

  \item Let $\frakp$ minimize first $\min_\frakp d_\frakp$ then $\min_\frakp r_\frakp$.
   For any such minimal prime $\frakp$, return \textsf{true}, $d_C \colonequals d_\frakp$ and the set of subfields $\{L_{\frakp,i} : [L_{\frakp,i}:\Q]=d_\frakp\} $.
  \end{enumalg}
\end{algm}

\begin{proof}[Proof of correctness]
  By Proposition~\ref{prop:containsL}(b), for each good $\frakp$ there is an embedding $\sigma\colon L \hookrightarrow \Q[T]/(g_\frakp(T))$.
  By finiteness, there exists a maximal subextension $\Q \subseteq L \subseteq L' \subseteq M$ with an embedding $L' \hookrightarrow \Q[T]/(g_\frakp(T))$, which we may take as extending $\sigma$.
  By (ii) $\Rightarrow$ (iii) of Proposition \ref{prop:galthy}, there exists a normic factor $h_{\frakp,i}(T) \mid g_\frakp(T)$ such that $L_{\frakp,i}=L'$.  Therefore the algorithm gives correct output for any prime $\frakp$ selected in Step 5.
\end{proof}

\begin{remark}
In step 3c we cannot limit ourselves to testing \textit{irreducible} factors, because a polynomial $f(T) \in \mathbb{Q}[T]$ may in general have no irreducible normic factors in $M[T]$, see Remark \ref{rmk:irreduciblenormicfactors}.
\end{remark}

\begin{proposition} \label{prop:algcorrect}
  Suppose that the Mumford--Tate conjecture for $A$ holds.  Then for large enough $C$, Algorithm \textup{\ref{alg:mainctr}} returns \textsf{true}, $d_C=[L:\Q]$, and a singleton  $\{L_{C,i}\}$, such that $[L_{C,i}:\Q]=d_C$ and $L \simeq L_{C,i}$.
\end{proposition}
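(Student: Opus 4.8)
The output is governed by the prime $\frakp^{\ast}$ chosen in Step~5, which minimises the pair $(d_\frakp,r_\frakp)$ lexicographically over the primes $\frakp\in S'_C$ processed in Step~3. The plan is to show that for $C$ large this minimum equals $([L:\Q],1)$ and is realised by a maximal normic factor whose coefficient field is isomorphic to $L$. First I would dispose of the failure branches. By Proposition~\ref{prop:containsL}(d) (which uses the Mumford--Tate conjecture) the set $S$ has positive density, so $S'$, which differs from $S$ by finitely many primes and satisfies $S'\subseteq S$ (a prime of good reduction for the model is a prime of good reduction for $A$), also has positive density; hence for $C$ large $S'_C$ has at least two elements, Step~1 does not return \textsf{false}, and Step~4 does not either, since no prime is deleted in Step~3c: the divisor $g_\frakp(T)$ is always itself a normic factor of $g_\frakp(T)$ over $M$, with coefficient field $\Q$. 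Next I would prove the lower bound $d_\frakp\ge[L:\Q]$ for each processed prime~$\frakp$. Fixing an embedding $L\hookrightarrow M\colonequals\Q[T]/(g_\frakq(T))$ via Proposition~\ref{prop:containsL}(b), Proposition~\ref{prop:containsL}(c) provides an irreducible $h_\frakp(T)\in L[T]\subseteq M[T]$ with $g_\frakp(T)=\Nm_{L|\Q}h_\frakp(T)$ whose coefficients generate $L$; a short check shows $h_\frakp(T)\mid g_\frakp(T)$ in $M[T]$, so $h_\frakp(T)$ is among the divisors tested in Step~3c, satisfies condition~(iii) of Proposition~\ref{prop:galthy} (with $\deg h_\frakp=\deg g_\frakp/[L:\Q]$), and contributes a field $L_{\frakp,i}\cong L$ of degree $[L:\Q]$ to the list. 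Hence $d_\frakp\ge[L:\Q]$; moreover, whenever $d_\frakp=[L:\Q]$, the isomorphism class of $L$ is one of the top-degree classes surviving Step~3e, and the unique one when $r_\frakp=1$.

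For the matching upper bound I would fix the prime $\frakq$ selected in Step~2; since $\frakq\in S$, Proposition~\ref{prop:zywinaS} furnishes a finite normal extension $N\supseteq M$ of $\Q$ and a density-zero set $\Sigma_\frakq$ such that for every $\frakp\in S\setminus\Sigma_\frakq$ the polynomial $g_\frakp(T)$ splits in $N[T]$ into exactly $[L:\Q]$ irreducible factors, all $\Gal{N}{\Q}$-conjugate (so of common degree $\deg g_\frakp/[L:\Q]$) and each normic over $N$ with coefficient field conjugate to $L$. For such a prime, any normic factor $h(T)$ of $g_\frakp(T)$ over $M$ divides $g_\frakp(T)$ in $N[T]$, hence equals a product of a nonempty set of those equidegree irreducible factors, so by Proposition~\ref{prop:galthy}(iii) its coefficient field $L_h$ satisfies $[L_h:\Q]=\deg g_\frakp/\deg h\le[L:\Q]$, with equality precisely when $h(T)$ is one of those factors. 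Combined with the lower bound, $d_\frakp=[L:\Q]$, and every maximal normic factor over $M$ is one of the $N$-irreducible factors, so its coefficient field is conjugate, hence isomorphic, to $L$ by Proposition~\ref{prop:zywinaS}(b); after the reduction to isomorphism classes in Step~3e this yields $r_\frakp=1$ with the single top-degree field isomorphic to $L$.

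To conclude, since $S'$ has positive density and $\Sigma_\frakq$ density zero, for $C$ large enough $S'_C$ contains a prime $\frakp_0\ne\frakq$ in $S\setminus\Sigma_\frakq$; for it $d_{\frakp_0}=[L:\Q]$ and $r_{\frakp_0}=1$, so the lexicographic minimiser $\frakp^{\ast}$ in Step~5 has $d_{\frakp^{\ast}}=d_C=[L:\Q]$ and $r_{\frakp^{\ast}}=1$, and the returned set is the singleton $\{L_{C,i}\}$ with $L_{C,i}$ of degree $d_C$ and isomorphic to $L$. The main obstacle is the argument of the second paragraph: replacing the algorithm's ground field $M(\frakq)$ by the normal closure $N$ and invoking Proposition~\ref{prop:zywinaS} is exactly what turns the soft embedding $L\hookrightarrow M(\frakp)$ of Proposition~\ref{prop:containsL} into the sharp statement that $L$ is the \emph{largest} field realised as the coefficient field of a normic factor of $g_\frakp(T)$, and it is the only place where the Mumford--Tate conjecture enters. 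A minor subtlety is that $\Sigma_\frakq$ depends on the prime $\frakq$ chosen in Step~2; this causes no harm, since one may, for instance, normalise Step~2 to select the prime of least norm in $S'$, which becomes constant once $C$ is large.
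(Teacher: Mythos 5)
Your proof is correct and follows essentially the same route as the paper: the key input in both cases is Proposition~\ref{prop:zywinaS}, which controls the factorization of $g_\frakp(T)$ over a normal extension $N$ of $M(\frakq)$ for primes $\frakp$ outside a $\frakq$-dependent density-zero set, and this gives the matching upper bound $d_\frakp \le [L:\Q]$ and the uniqueness statement $r_\frakp = 1$. The small differences in organization are to your credit: you make explicit that the lower bound $d_\frakp \ge [L:\Q]$ holds for \emph{every} processed prime (via Proposition~\ref{prop:containsL}(b)--(c) and Proposition~\ref{prop:galthy}), which is what guarantees the lexicographic minimizer in Step~5 cannot undershoot $[L:\Q]$, whereas the paper invokes this implicitly through the separate proof of correctness of the algorithm; you also observe directly that $g_\frakp(T)$ is always a normic factor of itself with coefficient field $\Q$, so Step~3c never discards a prime, making the nonemptiness checks in Steps~1 and 4 trivial, while the paper only verifies survival for the particular ``good'' prime. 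Your degree count $[L_h:\Q] = \deg g_\frakp/\deg h \le [L:\Q]$ is a clean alternative to the paper's use of Proposition~\ref{prop:maxgal}(b) for the upper bound; both are valid. The remark about normalizing the choice of $\frakq$ in Step~2 so that it stabilizes for large $C$ is a sensible clarification of a point the paper leaves implicit.
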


\begin{proof}
  By Proposition \ref{prop:zywinaS}, there exists an embedding $L \hookrightarrow M$ and
  an extension $N \supseteq M$, with $N$ normal over $\Q$, such that $g_\frakp(T)$ factors over $N[T]$ with exactly $[L:\Q]$ irreducible factors for all $\frakp \in S$ outside a set of density zero.  Moreover, each such irreducible factor is normic, and the number field generated by its coefficients is conjugate to $L$ (over $\Q$) by an element of $\Gal{N}{\Q}$.
  For $C$ large enough, in the course of the algorithm we will eventually find $\frakp \in S' \subseteq S$ which is not in this density zero set of exceptions.

  We first claim that such a prime $\frakp$ does not get discarded in Step 3.  Indeed, let $h'_{\frakp}(T)$ be any irreducible factor of $g_\frakp(T)$ in $N[T]$.  Then $h'_{\frakp}(T)$ is normic.  Moreover, its field of coefficients is isomorphic to $L$, so after Galois conjugation we may suppose it is equal to $L$.  Then $h'_{\frakp}(T) \in L[T] \subseteq M[T]$, so it is an irreducible factor of $g_{\frakp}(T)$ in $M[T]$, and still normic, so $h'_{\frakp}(T)=h_{\frakp,i}$ for some $i$, passing Step 3c.

  Next, we claim that for such a prime $\frakp$ we have $d_\frakp = [L:\Q]$, $r_\frakp=1$, and $\{L_{\frakp,i}\}_i=\{L\}$ (up to isomorphism).  Indeed, let $h_{\frakp,j}(T)$ be another normic factor of $g_\frakp(T)$ from Step 3c with $i \neq j$ and field of coefficients $L_{\frakp,j}$.  From the earlier factorization of $g_\frakp(T)$ over $N$ into normic irreducibles, there exists a normic factor of $h_{\frakp,j}(T)$ over $N[T]$ whose field of coefficients is Galois conjugate to $L$.  By Proposition \ref{prop:maxgal}(b), we conclude that $L_{\frakp,j}$ is contained in this field of coefficients, thus $[L_{\frakp,j}:\Q] \leq [L:\Q] = [L_{\frakp,i}:\Q]$ with equality if and only if $L_{\frakp,j} \simeq L$ if and only if $\deg h_{\frakp,j}(T)=d_\frakp$.

  To conclude, suppose that $\frakp'$ is a prime such that $d_{\frakp'}=d_\frakp=[L:\Q]$ and $r_{\frakp'}=1$.
  Then $L \hookrightarrow L_{\frakp',1}$ so by degrees $L \simeq L_{\frakp',1}$ and the desired conclusion holds.
\end{proof}

\begin{remark}
  In particular, the quantity $r_\frakp$ in Algorithm \ref{alg:mainctr} is only used when $C$ is not yet large enough for Proposition \ref{prop:algcorrect} to apply, and is used only to possibly reduce the size of the output (without affecting its correctness).
  \end{remark}

\begin{example}
For a very simple example of the algorithm, consider the elliptic curve with LMFDB
  label
    \href{http://www.lmfdb.org/EllipticCurve/Q/11/a/2}{\textsf{11.a2}}
 a model for the modular curve $X_0(11)$.
One can easily verify that $2, 3 \in S'$ and that
 $M(2) \simeq \Q(\sqrt{-1})$ and $M(3) \simeq \Q(\sqrt{-11})$.
 Thus by~Theorem~\ref{thm:mainthm}, $L = \Q$ and therefore $\End E^{\alg} = \Z$.
\end{example}

\begin{example}
 Let $X$ be the curve defined in $\PP^3$ by the equations
  \begin{equation}
    \begin{aligned}
    -yz - 12 z^{2} + x w - 32 w^{2} & = 0
    \\
    y^{3} + 108 x^{2} z + 36 y^{2} z + 8208 x z^{2} - 6480 y z^{2} + 74304 z^{3} + 96 y^{2} w
\\
                \hspace{4em} + 2304 y z w - 248832 z^{2} w + 2928 y w^{2} - 75456 z w^{2} + 27584 w^{3} & = 0.
  \end{aligned}
\end{equation}
Then $X$ is a canonically embedded curve of genus $4$.  This curve arises in the classification of elliptic curves over $\Q$ with constrained $3$-adic Galois image (in upcoming work of Jeremy Rouse, Drew Sutherland, and David Zureick-Brown): it can obtained as the image of a $\Q$-rational basis of modular forms attached to the newspace with LMFDB label \href{http://www.lmfdb.org/ModularForm/GL2/Q/holomorphic/81/2/c/b/}{\textsf{81.2.c.b}} of level $81$.  We use this example as a test case for our algorithm, ignoring its modular provenance.  Let $J \colonequals \Jac(X)$ be the Jacobian of $X$.

With a Gr\"obner basis computation one can show that $X$ has good reduction away from $2$ and $3$, and hence also $J$.
By point counting on the reduction of $X$ modulo $p$ one can compute $c_p(T)$, for $p \neq 2, 3$ which is feasible for small primes.
By employing Lemma~\ref{lem:guessm}, we guess $m = 4$ and
under that assumption we have that the first two primes in $S^{\prime}$ are $19$ and $37$.
Furthermore, we have
\begin{equation}
  \begin{aligned}
    g_{19}(T) &=  1 - 2 T + 19 T^{2},  & M(19) &\simeq \Q(\sqrt{-2});
    \\
    g_{37}(T) &=  1 + 7 T + 37 T^{2}, & M(37) &\simeq \Q(\sqrt{-11}).
\end{aligned}
\end{equation}
We conclude that $L = \Q$.  In fact, we can indeed verify \cite{costa-mascot-sijsling-voight-18} that $J$ is of $\GL_2$-type over $\Q$,
and geometrically we have an isogeny $J^{\alg} \sim E^4$ where $E$ is an elliptic curve
whose $j$-invariant satisfies $j^2 - 7317j + 283593393=0$.
\end{example}

\bibliographystyle{alpha}
\bibliography{biblio}

\end{document}